\documentclass[reqno, 11pt]{amsart}
\usepackage{times}
\usepackage[colorlinks]{hyperref}
\usepackage{url}
\newtheorem{theorem}{Theorem}[section]
\newtheorem{lemma}[theorem]{Lemma}
\newtheorem{proposition}[theorem]{Proposition}
\newtheorem{corollary}[theorem]{Corollary}

\theoremstyle{definition}
\newtheorem{definition}[theorem]{Definition}
\newtheorem{ex}[theorem]{Example}

\newtheorem{remark}[theorem]{Remark}

\theoremstyle{remark}

\numberwithin{equation}{section}

\usepackage{bbm}
\usepackage{url}
\usepackage{euscript}
\usepackage{sec}
\usepackage{pb-diagram}
\usepackage{lamsarrow}
\usepackage{pb-lams}
\usepackage{amsmath}
\usepackage{amsthm}

\usepackage{graphicx}
\usepackage{epstopdf}

\usepackage{amssymb}
\usepackage{pifont}
\usepackage{amsbsy}

\oddsidemargin 30pt \evensidemargin .0in \marginparsep 0pt
\topmargin 0.2pt \marginparwidth 0pt \baselineskip 14pt \textwidth
6.1in \textheight 8.6in

\newskip\aline \newskip\halfaline
\aline=12pt plus 1pt minus 1pt \halfaline=6pt plus 1pt minus 1pt
\def\skipaline{\vskip\aline}

\def\qedbox{$\rlap{$\sqcap$}\sqcup$}
\def\qed{\nobreak\hfill\penalty250 \hbox{}\nobreak\hfill\qedbox\skipaline}

\def\proofend{\eqno{\mbox{\qedbox}}}

\newcommand{\one}{\mathbbm{1}}

\newcommand{\uc}{\underline{\mathbb C}}

\newcommand\bC{{\mathbb C}}

\newcommand\bR{{\mathbb R}}
\newcommand\bS{{\mathbb S}}

\newcommand\bZ{{\mathbb Z}}



\DeclareMathOperator{\im}{\mathbf{Im}}

\DeclareMathOperator{\tr}{{\rm tr}}

\DeclareMathOperator{\sign}{sign}

 \DeclareMathOperator{\res}{{\bf
Res}}

\DeclareMathOperator{\codim}{codim}

\DeclareMathOperator{\diag}{Diag} 
 \DeclareMathOperator{\End}{End}



\newcommand{\bsA}{\boldsymbol{A}}
\newcommand{\ba}{{\boldsymbol{a}}}
\newcommand{\bc}{\boldsymbol{c}}
\newcommand{\hbc}{\hat{\boldsymbol{c}}}

\newcommand{\be}{\boldsymbol{e}}
\newcommand{\hbe}{\hat{\boldsymbol{e}}}

\newcommand{\ii}{\boldsymbol{i}}

\newcommand{\bu}{\boldsymbol{u}}

\newcommand{\bsE}{\boldsymbol{E}}
\newcommand{\bsH}{\boldsymbol{H}}
\newcommand{\bsI}{\boldsymbol{I}}

\newcommand{\bsT}{{\boldsymbol{T}}}


\newcommand{\bthet}{\boldsymbol{\theta}}

\newcommand{\bnu}{\boldsymbol{\nu}}

\newcommand{\bXi}{\boldsymbol{\Xi}}

\newcommand{\bom}{\boldsymbol{\omega}}


\newcommand{\si}{{\sigma}}

\newcommand{\ve}{{\varepsilon}}
\newcommand{\eps}{{\epsilon}}
\newcommand{\vfi}{{\varphi}}


\newcommand{\eA}{\EuScript{A}}
\newcommand{\eB}{\EuScript{B}}

\newcommand{\eD}{\EuScript{D}}

\newcommand{\eH}{\EuScript H}
\newcommand{\eI}{\EuScript{I}}
\newcommand{\eJ}{\EuScript{J}}
\newcommand{\eK}{\EuScript{K}}
\newcommand{\eL}{\EuScript{L}}

\newcommand{\eO}{\EuScript{O}}
\newcommand{\eP}{{\EuScript{P}}}

\newcommand{\eR}{\EuScript{R}}
\newcommand{\eS}{\EuScript{S}}

\newcommand{\eU}{\EuScript{U}}


\newcommand{\ra}{\rightarrow}
\newcommand{\hra}{\hookrightarrow}
\newcommand{\Lra}{{\longrightarrow}}

\def\inpr{\mathbin{\hbox to 6pt{\vrule height0.4pt width5pt depth0pt \kern-.4pt \vrule height6pt width0.4pt depth0pt\hss}}}


\newcommand{\pa}{\partial}
\newcommand{\bpar}{\bar{\partial}}

\newcommand{\nah}{\widehat{\nabla}}


\newcommand{\hg}{\hat{g}}
\newcommand{\wbe}{\widehat{\boldsymbol{E}}}

\DeclareMathOperator{\Dom}{Dom}

\newcommand{\Lag}{\mathrm{Lag}}

\DeclareMathOperator{\spec}{spec}
\DeclareMathOperator{\Iso}{Iso}

\begin{document}

\title{Dirac  operators  on cobordisms: degenerations and surgery}
\date{Started July 14, 2009. Completed  on August 14, 2009. This is the   \emph {\today} version.}

\author{Daniel F. Cibotaru}

\address{Department of Mathematics, University of Notre Dame, Notre Dame, IN 46556-4618.}
\email{cibotaru.1@nd.edu}

\author{Liviu I. Nicolaescu}

\address{Department of Mathematics, University of Notre Dame, Notre Dame, IN 46556-4618.}
\email{nicolaescu.1@nd.edu}

\subjclass[2000]{Primary  58J20, 58J28, 58J30, 58J32, 53B20, 35B25}

\keywords{Atiyah-Patodi-Singer index theorem, spectral flows, elliptic boundary value problems,    lagrangian spaces, Kashiwara index}

\begin{abstract} We investigate the Dolbeault operator on a pair of pants,  i.e., an elementary cobordism between a circle and the disjoint union of two circles.  This  operator  induces  a canonical selfadjoint   Dirac operator   $D_t$  on each regular level set $C_t$ of a fixed  Morse function  defining  this cobordism. We show that as we approach the critical level set  $C_0$ from above and from below these  operators    converge  in the gap topology to  (different) selfadjoint  operators $D_\pm$ that we describe explicitly. We also relate the   Atiyah-Patodi-Singer index of the Dolbeault operator on  the cobordism  to   the spectral flows  of the operators $D_t$ on the complement of $C_0$ and   the Kashiwara-Wall  index of a triplet of  finite dimensional lagrangian spaces canonically determined by $C_0$.
\end{abstract}

\maketitle

\tableofcontents

\section*{Introduction}
\addtocontents{section}{Introduction}
Suppose  $(M, g)$ is  compact oriented   odd dimensional Riemann manifold. We let  $\widehat{M}$denote  the cylinder  $[0,1]\times M$ and $\hat{g}$ denote the cylindrical metric $dt^2+g$.

Let $\hat{D}$ be  a first order elliptic operator operator  on $\widehat{M}$   that has the form
\begin{equation*}
\widehat{D}=\si(dt)\bigl(\, \nabla_t -  D(t)\,\bigr),
\tag{$\dag$}
\label{tag: dag}
\end{equation*}
where $\si$ denotes the principal symbol of $\widehat{D}$, and for every  $t\in [0,1]$ the operator $D(t)$ on $\{t\}\times M$ is elliptic and symmetric. For simplicity we assume that both $A(0)$ and $A(1)$ are  invertible.

A classical result of Atiyah, Patodi and Singer \cite[\S 7]{APS3} (see also \cite[\S 17.1]{KM})  relates the index $i_{APS}(\widehat{A})$ of the   Atiyah-Patodi-Singer problem  associated to $\widehat{D}$ to the spectral flow $SF(\, D(t)\,)$ of the family of Fredholm selfadjoint operators   $D(t)$. More precisely, they show that
\begin{equation*}
i_{APS}(\widehat{D})+ SF\bigl(\, D(t),\;0\leq t\leq 1\,\bigr)=0.
\tag{A}
\label{tag: a}
\end{equation*}
We can regard  the cylinder $\widehat{M}$ as a trivial     cobordism   between $\{0\}\times M$ and $\{1\}\times M$, and the coordinate $t$ as a Morse function on $\widehat{M}$   with no critical points.

In this paper   we initiate  an investigation   of the case   when $\widehat{M}$ is no longer a trivial  cobordism.  We outline below the   main themes of this investigation.

First,   we will  concentrate only on elementary cobordisms,   the ones that trace a single surgery.  We regard  such a cobordism as a pair $(\widehat{M}, f)$,  where $\widehat{M}$ is an even dimensional, compact oriented    manifold  with boundary, and $f$ is a Morse function on $\widehat{M}$  with a single critical point $p_0$ such that
\[
f(\widehat{M})=[-1,1],\;\; f(\pa M)=\{-1,1\},\;\;f(p_0)=0.
\]
We set $M_{\pm }:=f^{-1}(\pm 1)$ so that  we have a diffeomorphism of oriented manifolds  $\pa M= M_+\cup -M_{-}$. By removing the critical level set $M_0=f^{-1}(0)$ we obtain two cylinders 
\[
\widehat{M}_-=\{f<0\}\cong [-1, 0)\times M_{-},\;\;\widehat{M}_+=\{f>0\}\cong (0,1]\times M_+.
\]
Suppose $\hat{g}$ is a Riemann metric on $\widehat{M}$ and $\widehat{D}: C^\infty(E_+)\ra C^\infty (E_-)$ is a Dirac type operator  on $\widehat{M}$, where $E_+\oplus E_-$ is a $\bZ/2$-graded  bundle of Clifford modules.

 Using the bundle isomorphism  $\si(dt)$ we can regard $\widehat{D}$ as an operator $C^\infty(E_+)\ra C^\infty(E_+)$. As explained   in \cite{Gil}  (see also Section \ref{s: 2} of this paper), for every $t\neq 0$,  there is a canonically induced  symmetric Dirac operator $D(t)$ on the slice $M_t=f^{-1}(t)$.   We regard $D(t)$ as a linear operator $D(t): C^\infty(E_+|_{M_t})\ra   C^\infty(E_+|_{M_t})$, so that  if  $\hat{g}$ were a cylindrical metric then   formula (\ref{tag: dag}) would hold.    
 
 The Riemann metric $\hat{g}$ defines finite measures $dV_t$ on  all the slices   $M_t$, including the singular slice $M_0$.  In particular we obtain a  one parameter       family of Hilbert spaces 
 \[
 \bsH_t:= L^2(M_t, dV_t; E_+).
 \]
 We can now regard $D(t)$ as a  closed, densely defined linear operator on $\bsH_t$.
 
 \medskip
 
 \noindent {\bf Problem 1.} Organize   the  family $(\bsH_t)_{t\in [-1,1]}$ as a trivial  Hilbert bundle  over the interval $[-1,1]$
 \[
   \eH=\bsH\times [-1,1]\ra [-1,1].
 \]
 
 \smallskip

 Under reasonable  assumptions on $f$  and $\hat{g}$ we can use the gradient flow of  $f$ to address this issue. Once  this problem is solved  we can  regard     the operators $D(t)$, $t\neq 0$    as closed densely defined operators on the same Hilbert space $\bsH$.  We can then formulate   our next problem.
 
 \medskip
 
 \noindent {\bf Problem 2.}    Investigate whether  the limits 
 \[
 SF_-:=\lim_{\ve\searrow 0} SF(\, D(t), -1\leq t\leq -\ve),\;\; SF_+:=\lim_{\ve\searrow 0} SF(\, D(t),\; \ve\leq t\leq 1\,).
 \]
 exist and are finite.

 \medskip

 If  Problem 2 has a positive answer  we  are interested in a  version of (\ref{tag: a})  relating  these limits to the Atiyah-Patodi-Singer index of  $\widehat{D}$   in the noncylindrical  formulation of  \cite{Gil, Grubb}.

 \medskip
 
 \noindent {\bf Problem 3.}   Express the quantity
 \begin{equation*}
 \delta:=i_{APS}(\widehat{D})+ SF_-+SF_+
 \tag{B}
 \label{tag: b}
 \end{equation*}
 in terms  of   invariants  of the singular level  set $M_0$.

 \medskip

 The  existence of the limits  in Problem 2  is  a  consequence of a much more refined analytic behavior  of the family  of operators $D(t)$ that we now proceed to  explain. We  set
     \[
     \widehat{\bsH}:= \bsH\oplus \bsH,\;\; {\bsH}_+=\bsH\oplus 0,\;\; {\bsH}_-=0\oplus \bsH,
     \]
 and     denote by $\Lag$ the Grassmannian of   hermitian lagrangian  subspaces $\widehat{\bsH}$. These are  complex subspaces $L\subset \widehat{\bsH}$ satisfying $L^\perp =JL$, where $J: \bsH\oplus \bsH\ra \bsH\oplus \bsH$  is the operator with block decomposition
 \[
 J=\left[
 \begin{array}{cc}
 0 & -1\\
 1 & 0
 \end{array}
 \right].
 \]
 Following \cite{Cib} we denote by $\Lag^-$ the open subset of  $\Lag$ consisting   of  lagrangians $L$ such that  the pair  of subspaces $(L, \widehat{\bsH}_-)$ is a Fredholm pair, i.e.,
 \[
 L+{\bsH}_-\;\;\mbox{is closed and}\;\;\dim L\cap {\bsH}_- <\infty
 \]
 As explained  in \cite{Cib}, the space  $\Lag^-$ equipped with the gap topology of \cite[\S IV.2]{K} is a classifying spaces for the complex $K$-theoretic functor $K^1$.        
 
 To a closed  densely defined operator $T:\Dom(T)\subset \bsH\ra \bsH$ we associate its switched graph
 \[
 \widetilde{\Gamma}_T:=\Bigl\{ (Th,h)\in \widehat{\bsH};\;\;h\in \Dom(T)\,\Bigr\}.
 \]
 Then  $T$ is selfadjoint if and only if   $\widetilde{\Gamma}_T\in \Lag$.  It is also Fredholm if and only if $\widetilde{\Gamma}_T\in \Lag^-$.    We can now formulate a refinement of   Problem 2.
 
 \medskip
 
 \noindent {\bf Problem} $\mathbf{2^*}$. Investigate whether the limits  $\widetilde{\Gamma}_\pm =\lim_{t\searrow 0} \widetilde{\Gamma}_{D(\pm t)}$ exist in the gap topology and, if so, do  they belong to $\Lag^-$.

 \medskip
 
 The gap  convergence of the  switched graphs of operators is equivalent to the convergence in norm as $t\ra 0^\pm$ of the resolvents $R_t=(\ii + D(t)\,)^{-1}$.  To show  that $\widetilde{\Gamma}_\pm\in \Lag^-$ it suffices to show  that  the limits $R_\pm =\lim_{t\ra 0^\pm} R_t$ are  compact  operators.   If in  addition\footnote{The condition    $\widetilde{\Gamma}_\pm\cap \widehat{\bsH}_-=0$ is not really needed, but  it makes our presentation more transparent. In any case, it is generically satisfied.} $\widetilde{\Gamma}_\pm\cap \widehat{\bsH}_-=0$ then the limits in   Problem 2 exist and are finite.

 An even analog of Problem $2^*$  was investigated in \cite{SS}.  The role of the   smooth slices $M_t$   was played there by a $1$-parameter family  of Riemann surfaces degenerating to  a  Riemann surface  with single singularity  of the simplest type, a  node. The authors show that the gap limit of the   graphs of Dolbeault operators on $M_t$   exists  and then   described it explicitly.  
 
 In this paper we solve Problems 1, $2^*$ and 3 in the symplest possible case, when $\widehat{M}$ is an elementary $2$-dimensional cobordism, i.e., a pair of pants (see Figure \ref{fig: cobdeg1})  and $\widehat{D}$  is the Dolbeault operator on the Riemann surface $\widehat{M}$.
 
 We solved  Problem 1 by an ad-hoc intuitive method.       The limits   $\widetilde{\Gamma}_\pm$ in Problem  $2^*$ turned out to be switched graphs of   certain Fredholm-selfadjoint operators $D_\pm$, $\widetilde{\Gamma}_\pm=\widetilde{\Gamma}_{D_\pm}$.  
 
We describe these operators    as realizations of two different boundary value problems associated to the same   symmetric Dirac operator $D_0$  defined  on  the disjoint union of  four intervals. These intervals are obtained by removing the  singular point of the critical level set  $M_0$ and then cutting in two each of the resulting two components. The boundary conditions    defining $D_\pm$ are described by  some ($4$-dimensional)  lagrangians $\Lambda_\pm$  determined by the geometry of the singular slice $M_0$. The operators $D_\pm$  have well defined    eta invariants $\eta_\pm$.  If $\ker D_\pm = 0$ then we can express the defect $\delta$ in (\ref{tag: b}) as
 \begin{equation*}
 \delta= \frac{1}{2}\bigl(\,\eta_--\eta_+\,\bigr).
 \tag{C}
 \label{tag: c}
 \end{equation*}
 The above difference of eta invariants admits a purely symplectic  interpretation very similar to the signature additivity defect of  Wall \cite{Wall}. More precisely, we show that
 \begin{equation*}
 \delta=-\omega(\, \Lambda_0^\perp, \Lambda_+,\Lambda_-\,\bigr),
 \tag{D}
 \label{tag: d}
 \end{equation*}
 where $\Lambda_0$ is the Cauchy data space of the operator $D_0$ and $\omega(L_0,L_1,L_2)$ denotes the Kashiwara-Wall  index of a triplet of lagrangians canonically determined  by $M_0$; see  \cite{CLM, KL, Wall} or Section \ref{s: 4}.
 
 Here is briefly how we structured the paper.  In Section \ref{s: 1}    we investigate  in great detail  the type of degenerations that occur  in the family $D(t)$ as $t\ra 0^\pm$.      It   boils down to understanding  the behavior of families of operators of the  unit circle $S^1$ of the type
 \[
L_\ve= -\ii\frac{d}{d\theta} + a_\ve(\theta),
 \]
 where    $\{a_\ve\}_{\ve >0}$ is a family of smooth  functions on the unit circle that converges   in a rather weak  sense way as $\ve \ra 0$  to a Dirac  measure supported at a point $\theta_0$.    For example if we think of $a_\ve$ as densities   defining measures  converging weakly to the Dirac measure, then the   corresponding family of operators has a well defined gap limit; see Corollary \ref{cor: cont}. 
 
   In  Theorem \ref{th: split} we give an explicit  description of this limiting operator as an  operator   realizing  a natural boundary value   problem on the    \emph{disjoint} union of the two intervals, $[0,\theta_0]$ and $[\theta_0,2\pi]$.   This section also contains a detailed  discussion of the eta invariants of operators of the type  $-\ii\frac{d}{d\theta} + a(\theta)$, where $a$ is a allowed to be the ``density'' of  any finite Radon measure.

In Section 2 we survey mostly known  facts concerning the Atiyah-Patodi-Singer problem when the  metric near the boundary is not cylindrical.   Because the  various orientation conventions vary wildly in the existing literature, we decided to go careful through   the computational details.   We discuss two topics. First, we explain  what is the restriction of a Dirac operator to a cooriented hypersurface   and relate this construction    to  another conceivable  notion of restriction.  In the second part of this section we discuss the noncylindrical version  of the  Atiyah-Patodi-Singer index  theorem. Here we follow closely the presentation in \cite{Gil, Grubb}.

In Section \ref{s: 3} we formulate  and prove the main result of this paper, Theorem \ref{th: main}.      The solution  to Problem $2^*$ is obtained by   reducing the study of the degenerations to the model degenerations  investigated in  Section \ref{s: 1} The equality (\ref{tag: c})  follows  immediately from the noncyclindrical version of the Atiyah-Patodi-Singer index theorem discussed in Section \ref{s: 2} and the eta invariant computations in Section \ref{s: 1}.    In the last  section we  present a few facts about  the Kashiwara-Wall  triple index    and   then use them to prove (\ref{tag: d}).  Our definition of triple index is the one  used  by Kirk and Lesch \cite{KL} that generalizes to infinite dimensions.

Finally a few words about  conventions and notation.    We consistently orient the boundaries using the outer-normal-first convention. We let $\ii$ stand for $\sqrt{-1}$ and we let $L^{k,p}$ denote Sobolev spaces of   functions that  have weak derivatives up to order $k$ that belong to $L^p$.

\section{A model degeneration}
\label{s: 1}
\setcounter{equation}{0}
Let $L>0$ be a positive number. Denote by $\bsH$ the Hilbert space $L^2([0,L],\bC)$.  To any smooth function $a:\bR\ra \bR$ which is $L$-periodic we associate the    selfadjoint operator
\[
D_a:  \Dom (D_a)\subset  \bsH\ra \bsH,
\]
where
\begin{equation}
\Dom(D_a)=\bigl\{ \,u\in L^{1,2}([0,L],\bC);\;\;u(0)=u(L)\,\bigr\},\;\;D_a u=-\ii\frac{du}{dt} +au.
\label{eq: da}
\end{equation}
In this section we would like to  understand  the dependence of $D_a$ on the potential $a$, and in particular, we would like to allow for more singular potentials  such as a Dirac distribution concentrated  at an interior point of the interval. We will reach this goal via a limiting procedure that we implement in  several steps.

We  observe  first  that $D_a$ can be expressed in terms of the resolvent  $R_a:=(\ii+D_a)^{-1}$ as $D_a= R_a^{-1} -\ii$. The advantage of this point of view   is that we can express   $R_a$ in terms   of   the more regular function
\begin{equation*}
A(t):=\int_0^t a(s) ds.
\tag{$\ast$}
\label{tag: ast}
\end{equation*}
which continues to make sense even  when  there is no   integrable function $a$ such that  (\ref{tag: ast}) holds.  For example, we can allow  $A(t)$  to be any function with bounded variation so that,    formally, $a$ ought  to be the density of any Radon measure on $[0,L]$. 

 This will allow us to conclude  that when  we have a family of smooth potentials  $a_n$  that converge  in a  suitable sense to   something singular such as a Dirac function, then the  operators $D_{a_n}$ have a limit in the gap topology to a Fredholm selfadjoint  operator with compact rezolvent.  We show that in many cases this limit operator can be expressed as  the   Fredholm operator defined by a boundary value problem.

We  begin  by expressing  $R_a$ as an integral operator. We set
\[
A(t):=\int_{0}^t a(s) ds,\;\;\Phi_A(t):=\ii A(t)-t.
\]
For $f\in H$ the function $u=R_af$ is the solution of the  boundary value problem
\[
\Bigl(\,\ii-\ii\frac{d}{dt}\,\Bigr)u +au=f,\;\;u(0)=u(L).
\]
We rewrite the above equation as
\[
\frac{du}{dt}+(\ii a-1)u =\ii f
\]
from which we deduce
\[
\frac{d}{dt} \Bigl(\,e^{\Phi_A(t)}u(t)\,\Bigr)= \ii e^{\Phi_A(t)} f(t).
\]
This implies  that
\[
e^{\Phi_A(t)}u(t)-u(0)=\ii \int_0^t  e^{\Phi_A(s)} f(s) ds,\;\;\forall t\in [0,L].
\]
If in the above equality we let $t=L$ and use condition $u(0)=u(L)$  we deduce
\[
u(0)= \frac{\ii}{ e^{\Phi_A(L)}-1}\int_0^L e^{\Phi_A(s)} f(s) ds.
\]
Finally we deduce
\begin{equation}
u(t)=R_a f= \frac{\ii e^{-\Phi_A(t)}}{ e^{\Phi_A(L)}-1}\int_0^L e^{\Phi_A(s)} f(s) ds+ \ii \int_0^t  e^{-(\Phi_A(t)-\Phi_A(s)} f(s) ds.
\label{eq: r1}
\end{equation}
The key point of the above  formula is that $R_a$ can   be expressed  in terms of the antiderivative $A(t)$ which typically has milder singularities than $a$. To analyze the dependence of $R_a$ on $A$  we   introduce a  class of admissible functions.

\begin{definition} (a) We say that $A:[0,L]\ra  \bR$ is \emph{admissible}    if  $A$   has bounded variation, it is right continuous,   and $A(0)=0$. We denote by $\eA$ or $\eA_L$ the class of admissible functions.

\noindent (b)    We say that  a sequence $\{A_n\}_{n\geq 0}\subset \eA$ converges \emph{very weakly} to $A\in \eA$ if   there exists   a negligible   subset $\Delta \subset (0,L)$ such that
\[
\lim_{n\ra \infty} A_n(t)=A(t),\;\;\forall t\in [0,L]\setminus \Delta.\proofend
\]
\label{def: a}
\end{definition}

\begin{remark}   (a)  Note that if $A_n$ converges   very weakly to $A$ then $A_n(L)$ converges to $A(L)$.

\noindent  (b)  Let us explain  the motivation behind  the ``very weak'' terminology.   An admissible   function $A$ defines a finite Lebesgue-Stieltjes   measure $\mu_A$ on $[0,L]$, and the resulting map  $A\mapsto \mu_A$ is a linear isomorphism between $\eA$ and the space of finite Borel measures on $[0,L]$, \cite[Thm. 3.29]{Fol}.  Thus, we can identify $\eA$ with the space of finite Borel  measures on $[0,L]$. As such it is equipped with a weak topology.

According to \cite[\S 4.22]{Ed},  a sequence of  Borel measures $\mu_{A_n}$  is weakly convergent to  $\mu_A$ if and only if $\mu_{A_n}(\eO)\ra \mu_{A}(\eO)$, for any  (relatively) open subset $\eO$ of $[0,L]$.  This clearly  implies  the very  weak  convergence introduced in Definition \ref{def: a}. \qed
\label{rem: weak}
\end{remark}

 Inspired by (\ref{eq: r1}) we define for every $A\in \eA$ the function $\Phi_A(t)= \ii A(t)-t$ and the integral kernels
\[
\eS_A:[0,L]\times [0,L]\ra \bC,\;\; \eS_A(t,s)= \frac{\ii }{ e^{\Phi_A(L)}-1} e^{-\bigl(\,\Phi_A(t)-\Phi_A(s)\,\bigr)},\;\;\forall t,s\in [0,L],
\]
\[
\eK_A:[0,L]\times [0,L]\ra \bC,\;\;\eK_A(t,s)=\begin{cases}
0 &  t<s\\
\ii e^{-\bigl(\,\Phi_A(t)-\Phi_A(s)\,\bigr)}& t\geq s.
\end{cases}
\]
Observe that there exists a constant $C>0$ such that
\begin{equation}
\|\eS_A\|_{L^\infty([0,L]\times [0,L])}+\|\eK_A\|_{L^\infty([0,L]\times [0,L])}\leq C,\;\;\forall A\in \eA.
\label{eq: bounded}
\end{equation}
Thus, these kernels    define   bounded compact operators $S_A,K_A:\bsH\ra \bsH$; see \cite[\S X.2]{Yos}.  Moreover, if we denote by $\|\bullet\|_{\rm op}$  the operator norm  on the space $\eB(\bsH)$ of   bounded linear operators $\bsH\ra \bsH$ then we have the estimates  that
\begin{equation}
\|S_A\|_{\rm op}\leq \|\eS_A\|_{L^2([0,L]\times [0,L])},\;\; \|K_A\|_{\rm op}\leq \|\eK_A\|_{L^2([0,L]\times [0,L])}.
\label{eq: b1}
\end{equation}
We can now rewrite (\ref{eq: r1}) as
\begin{equation}
R_a = R_A:=S_A+ K_A.
\label{eq: r2}
\end{equation}
 \begin{proposition}  If  $A_n$ converges very weakly  to $A$ then $S_{A_n}$ and $K_{A_n}$ converge in the operator norm topology to $S_A$ and respectively $K_A$.
\label{prop: cont}
\end{proposition}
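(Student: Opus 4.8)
The plan is to prove convergence of the two integral operators by showing that their kernels converge in $L^2([0,L]\times[0,L])$, and then invoke the estimate~\eqref{eq: b1}. By~\eqref{eq: r2} this will give the claimed operator-norm convergence of $S_{A_n}$ and $K_{A_n}$. So the whole proposition reduces to: if $A_n\to A$ very weakly, then $\eS_{A_n}\to\eS_A$ and $\eK_{A_n}\to\eK_A$ in $L^2$ of the square.

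First I would unpack the structure of the kernels. Both are built from the single function $e^{-\Phi_A(t)}=e^{-\ii A(t)+t}$ and its reciprocal $e^{\Phi_A(s)}=e^{\ii A(s)-s}$, together with the scalar prefactor $\ii/(e^{\Phi_A(L)}-1)$. Very weak convergence of $A_n$ to $A$ means $A_n(t)\to A(t)$ for all $t$ outside a Lebesgue-negligible set $\Delta\subset(0,L)$; since $t\mapsto e^{-\ii A(t)+t}$ is a continuous function of $A(t)$, we get $e^{-\Phi_{A_n}(t)}\to e^{-\Phi_A(t)}$ pointwise for $t\notin\Delta$, hence almost everywhere on $[0,L]$, and likewise $e^{\Phi_{A_n}(s)}\to e^{\Phi_A(s)}$ a.e. Consequently the product kernels $\eS_{A_n}(t,s)$ and $\eK_{A_n}(t,s)$ (the latter including the fixed indicator $\{t\ge s\}$, which does not depend on $n$) converge to $\eS_A(t,s)$, $\eK_A(t,s)$ for a.e. $(t,s)$ in the square. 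For the scalar prefactor one needs that $e^{\Phi_{A_n}(L)}-1$ stays bounded away from $0$; here I would use Remark~\ref{rem: weak}(a), namely $A_n(L)\to A(L)$, so $e^{\Phi_{A_n}(L)}\to e^{\Phi_A(L)}$, and note $e^{\Phi_A(L)}=e^{\ii A(L)-L}$ has modulus $e^{-L}<1$, so $e^{\Phi_{A_n}(L)}-1$ is uniformly bounded below in absolute value for $n$ large; the prefactors therefore converge and are uniformly bounded.

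Having a.e.\ convergence of the kernels, I would upgrade to $L^2$ convergence by dominated convergence: the uniform bound~\eqref{eq: bounded} shows $|\eS_{A_n}(t,s)|+|\eK_{A_n}(t,s)|\le C$ for all $n$ and all $(t,s)$, so the constant $C$ (which is square-integrable on the finite-measure square $[0,L]\times[0,L]$) dominates, and $\|\eS_{A_n}-\eS_A\|_{L^2}\to 0$, $\|\eK_{A_n}-\eK_A\|_{L^2}\to 0$. Combining with~\eqref{eq: b1} yields $\|S_{A_n}-S_A\|_{\mathrm{op}}\to 0$ and $\|K_{A_n}-K_A\|_{\mathrm{op}}\to 0$, which is the assertion.

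The only genuine subtlety — the ``main obstacle'' — is the uniform lower bound on $|e^{\Phi_{A_n}(L)}-1|$ needed to control the prefactor of $\eS_{A_n}$; everything else is pointwise algebra plus dominated convergence. This is handled cleanly because $\re\Phi_A(L)=-L<0$ forces $|e^{\Phi_A(L)}|=e^{-L}<1$, so $1$ is never on the closure of the set of values $\{e^{\Phi_{A_n}(L)}\}$, and the convergence $A_n(L)\to A(L)$ keeps these values in a compact subset of $\{|z|\le e^{-L}\}$, uniformly away from $1$. (It is worth noting explicitly why $\re\Phi_A(L)=-L$: $A$ is real-valued, so $\Phi_A(t)=\ii A(t)-t$ has real part exactly $-t$; this is also the source of the uniform bound~\eqref{eq: bounded} in the first place, since the exponent $-(\Phi_A(t)-\Phi_A(s))=\ii(A(s)-A(t))+(t-s)$ has real part $t-s\le L$, so $|e^{-(\Phi_A(t)-\Phi_A(s))}|\le e^{L}$ on the region $t\ge s$ and is $\le 1$ elsewhere in the $\eK_A$ case.) No regularity of the $A_n$ beyond admissibility is used, and in particular no appeal to the finer Definition~\ref{def: a}(b) machinery is needed.
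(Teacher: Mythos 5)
Your argument is exactly the paper's: a.e.\ pointwise convergence of the kernels (from very weak convergence), dominated convergence using the uniform bound~\eqref{eq: bounded} to upgrade to $L^2$ convergence of the kernels, then~\eqref{eq: b1} to pass to operator-norm convergence. You fill in a detail the paper leaves implicit, namely that $|e^{\Phi_{A_n}(L)}-1|$ stays uniformly bounded below because $|e^{\Phi_{A_n}(L)}|=e^{-L}<1$, which is a welcome clarification but not a different route.
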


\begin{proof} The very weak convergence  implies that
\[
\eS_{A_{n}}(t,s) \stackrel{k\ra \infty}{\Lra}  \eS_{A}(t,s),\;\;\eK_{A_{n}}(t,s) \stackrel{k\ra \infty}{\Lra}  \eK_{A}(t,s)\;\;\mbox{a.e. on $[0,L]\times [0,L]$}.
\]
Using (\ref{eq: bounded}), the above pointwise convergence and the dominated convergence theorem we deduce
\[
\lim_{n\ra \infty}\Bigl(\,\|\eS_{A_{n}}-\eS_A\|_{L^2([0,L]\times[0,L])}+ \|\eK_{A_{n}}-\eK_A\|_{L^2([0,L]\times[0,L])}\,\Bigr)=0.
\]
Using (\ref{eq: b1}) we deduce that
\[
\lim_{n\ra \infty}\Bigl(\,\|S_{A_{n}}-S_A\|_{\rm op}+ \|S_{A_{n}}-S_A\|_{\rm op}\,\Bigr)=0.\]
\end{proof}
We  want to describe the spectral decompositions of the  operators $R_A$, $A\in\eA$.     To do this we rely on the fact that for certain  $A$'s  the operator  $R_A$ is the resolvent of an elliptic selfadjoint operator on $S^1$. We use this to  produce an intelligent guess for the spectrum of $R_A$ in general.

Let $a$ be a smooth, real valued, $L$-period function on $\bR$ and form again the operator $D_a$ defined in (\ref{eq: da}). We set as usual
\[
A(t)=\int_0^t a(s) ds.
\]
The operator  $D_a$ has discrete real spectrum. If $u(t)$ is an eigenfunction corresponding to an eigenvalue $\lambda$ then
\[
-\ii \frac{du}{dt} +a u=\lambda u \Rightarrow  \frac{du}{dt} +i(a-\lambda)u=0
\]
so that $u(t)=u(0)  e^{-\ii A(t)+\ii\lambda t}$. The periodicity assumption implies $\lambda L-A(L)\in 2\pi\bZ$ so the spectrum of $D_a$ is
\begin{equation}
\spec(D_a)=\left\{\, \lambda_{A,n}:=\frac{2\pi}{L}\bigl(\,\omega_A+n\,\bigr);\;\;n\in \bZ\,\right\},\;\;\mbox{where}\;\; \omega_A:=\frac{A(L)}{2\pi}.
\label{eq: spec}
\end{equation}
The eigenvalue $\lambda_{A,n}$ is simple and the eigenspace   corresponding to $\lambda_{A,n}$ is spanned by
\[
\psi_{A,n}(t):= e^{\frac{2\pi n\ii t}{L}} e^{-\ii (A(t)-\frac{A(L) t}{L})}.
\]
The numbers $\lambda_{A, n}$ and the functions  $\psi_{A,n}$ are well defined \emph{for any} $A\in \eA$.

\begin{lemma} Let $A\in \eA$.  Then  the collection $\{\psi_{A,n}(t);\;\;n\in\bZ\}$  defines a  Hilbert basis of $\bsH$.
\end{lemma}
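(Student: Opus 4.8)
The plan is to recognize each $\psi_{A,n}$ as the image of a standard exponential under a unitary multiplication operator, so that orthogonality and completeness are inherited from the classical Fourier basis of $L^2([0,L],\bC)$.

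First I would record the relevant properties of the phase factor. Set $\phi_A(t):=A(t)-\frac{A(L)}{L}t$ for $t\in[0,L]$. Since $A\in\eA$ is of bounded variation it is bounded and Borel measurable; hence so is $\phi_A$, and $\phi_A$ is real-valued because admissible functions take values in $\bR$. Consequently $m(t):=e^{-\ii\phi_A(t)}$ is a bounded Borel function with $|m(t)|=1$ for every $t\in[0,L]$, and by the very definition of $\psi_{A,n}$ we have $\psi_{A,n}(t)=e^{2\pi\ii nt/L}\,m(t)$.

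Next I would introduce the multiplication operator $M\colon\bsH\to\bsH$, $Mf:=m\cdot f$. Because $|m|\equiv1$ it is an isometry, and its inverse is multiplication by $\overline m=1/m$, of the same type, so $M$ is unitary. Writing $e_n(t):=L^{-1/2}e^{2\pi\ii nt/L}$, we then have $\psi_{A,n}=\sqrt{L}\,Me_n$. Invoking the classical fact that $\{e_n\}_{n\in\bZ}$ is a Hilbert basis of $L^2([0,L],\bC)$, and the elementary observation that a unitary operator carries a Hilbert basis to a Hilbert basis, we conclude that $\{Me_n\}_{n\in\bZ}=\{L^{-1/2}\psi_{A,n}\}_{n\in\bZ}$ is a Hilbert basis of $\bsH$. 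In particular the $\psi_{A,n}$ are pairwise orthogonal with $\|\psi_{A,n}\|^2=\int_0^L|m(t)|^2\,dt=L$ and span a dense subspace; if one wishes to avoid quoting the unitarity argument, orthogonality can be verified directly from $\langle\psi_{A,n},\psi_{A,k}\rangle=\int_0^L e^{2\pi\ii(n-k)t/L}|m(t)|^2\,dt=\int_0^L e^{2\pi\ii(n-k)t/L}\,dt=L\,\delta_{nk}$, and density from the density of trigonometric polynomials after multiplying by the invertible factor $m$.

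There is essentially no obstacle in this argument; the only point deserving a word of care is that an admissible $A$, though only of bounded variation, is nonetheless bounded and measurable — precisely what is needed for $M$ to be a well-defined unitary operator. (If "Hilbert basis" is meant in the normalized sense, one replaces $\psi_{A,n}$ by $L^{-1/2}\psi_{A,n}$ throughout.)
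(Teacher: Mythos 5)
Your proposal is correct and follows exactly the paper's own argument: you define the unitary multiplication operator $M = U_A$ given by $f \mapsto e^{-\ii(A(t)-\frac{A(L)t}{L})}f$ and observe that it carries the classical Fourier basis $\{e_n\}$ to $\{\psi_{A,n}\}$. The added remark on boundedness and measurability of $A$ (hence well-definedness of $M$) is a small but worthwhile clarification that the paper leaves implicit.
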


\begin{proof} Observe  first that the collection
\[
e_n(t)=\psi_{A=0,n}(t)=e^{\frac{2\pi n\ii t}{L}} ,\;\;n\in\bZ
\]
is the canonical Hilbert basis of $\bsH$ that leads to the classical Fourier  decomposition.  The map
\[
U_A:\bsH\ra \bsH,\;\;   \bsH\ni f(t)\mapsto   e^{-\ii (A(t)-\frac{A(L) t}{L})} f(t)
\]
is unitary. It maps  $e_n$ to $\psi_{A,n}$  which  proves our claim.
\end{proof}

A direct computation shows that
\[
R_A\psi_{A,n}=\frac{1}{\ii +\lambda_{A,n}}\psi_{A,n},\;\;\forall A\in \eA,\;\;A\in\eA.
\]
This proves that   for any $A\in \eA$ the collection $\{\psi_{A,n}\}_{n\in\bZ}$ is a    Hilbert basis that diagonalizes the operator $R_A$. Observe that $R_A$ is injective and compact. We define
\[
T_A:= R_A^{-1}-\ii.
\]
The operator $T_A$, is unbounded, closed and densely defined with domain  $\Dom(T_A)= {\rm Range}\,(R_A)$. We will  present later a more  explicit description of $\Dom(T_A)$ for a  large class of $A$'s.  

Note that when
\[
A=\int_0^t a(s) ds,\;\;\mbox{$a$ smooth and $L$-periodic},
\]
the operator $T_A$  coincides with  the operator $D_a$ defined in (\ref{eq: da}).  Proposition \ref{prop: cont} can be rephrased as follows. 

\begin{corollary} If the sequence $(A_n)_{n\geq 1}\subset \eA$ converges very weakly to $A\in \eA$ then the sequence of unbounded operators $(T_{A_n})_{n\geq 1}$ converges in  the gap topology to the unbounded operator $T_A$.\qed
\label{cor: cont}
\end{corollary}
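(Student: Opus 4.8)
The plan is to deduce the gap convergence $T_{A_n}\to T_A$ directly from the operator-norm convergence of resolvents already supplied by Proposition~\ref{prop: cont}, by exhibiting an explicit isometric parametrization of the switched graphs. Recall that by the diagonalization established just above, each $R_A$, $A\in\eA$, is an injective compact operator with the Hilbert basis $\{\psi_{A,n}\}$ of eigenvectors and nonzero eigenvalues $(\ii+\lambda_{A,n})^{-1}$, $\lambda_{A,n}\in\bR$; hence $T_A=R_A^{-1}-\ii$ is a selfadjoint operator with compact resolvent $R_A=(\ii+T_A)^{-1}$ and domain $\Dom(T_A)=\mathrm{Range}(R_A)$. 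By (\ref{eq: r2}) and Proposition~\ref{prop: cont}, $R_{A_n}=S_{A_n}+K_{A_n}\to S_A+K_A=R_A$ in the operator norm. It remains to convert this into convergence of the switched graphs $\widetilde\Gamma_{T_{A_n}}\to\widetilde\Gamma_{T_A}$ in the gap topology, which is the meaning of gap convergence for this family.

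The main device is the following. For a selfadjoint operator $T$ on $\bsH$ with resolvent $R=(\ii+T)^{-1}$, define $B_T\colon\bsH\to\widehat\bsH$ by $B_Tf=\bigl((I-\ii R)f,\;Rf\bigr)$. Writing $h=Rf\in\Dom(T)$, one has $Th=f-\ii h=(I-\ii R)f$, so $B_Tf=(Th,h)$ and the range of $B_T$ is precisely $\widetilde\Gamma_T$ as $f$ ranges over $\bsH$; moreover $\|B_Tf\|^2=\|Th\|^2+\|h\|^2=\|(\ii+T)h\|^2=\|f\|^2$, the middle equality because $T$ is symmetric. Thus $B_T$ is an isometry, and $P_T:=B_TB_T^*$ is the orthogonal projection of $\widehat\bsH$ onto $\widetilde\Gamma_T$.

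Applying this with $T=T_{A_n}$ and $T=T_A$: the difference $(B_{T_{A_n}}-B_{T_A})f$ has components $-\ii(R_{A_n}-R_A)f$ and $(R_{A_n}-R_A)f$, so $\|B_{T_{A_n}}-B_{T_A}\|_{\mathrm{op}}=\sqrt2\,\|R_{A_n}-R_A\|_{\mathrm{op}}\to0$. Since isometries have unit norm and $\|C^*\|=\|C\|$, one gets $\|P_{T_{A_n}}-P_{T_A}\|\le\|B_{T_{A_n}}\|\,\|B_{T_{A_n}}^*-B_{T_A}^*\|+\|B_{T_{A_n}}-B_{T_A}\|\,\|B_{T_A}^*\|=2\,\|B_{T_{A_n}}-B_{T_A}\|\to0$. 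Finally, for closed subspaces $M,N\subset\widehat\bsH$ one has the standard estimate $\widehat\delta(M,N)\le\|P_M-P_N\|$ for the gap $\widehat\delta$ of \cite[\S IV.2]{K}, so the switched graphs converge in the gap topology; this is exactly the assertion of the Corollary.

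The only point needing a little care is that $T_A$ be genuinely selfadjoint for arbitrary $A\in\eA$ — it is used both for the isometry identity $\|B_Tf\|=\|f\|$ and, implicitly, to know $\widetilde\Gamma_{T_A}\in\Lag$ — but this is forced by the explicit eigenbasis: an operator whose natural domain $\mathrm{Range}(R_A)$ is spanned, in the $\ell^2$ sense, by an orthonormal eigenbasis with real eigenvalues is automatically selfadjoint. I do not expect any genuine obstacle here; the entire content of the corollary sits in Proposition~\ref{prop: cont}. One could alternatively bypass the projection computation by quoting the equivalence of gap convergence of closed operators with norm-resolvent convergence at a common regular point, but the argument above keeps the discussion self-contained.
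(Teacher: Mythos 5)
Your proof is correct and takes essentially the same route as the paper: deduce the corollary from Proposition~\ref{prop: cont} by passing from operator-norm convergence of the resolvents $R_{A_n}\to R_A$ to gap convergence of the switched graphs, a standard equivalence the paper invokes without comment (``Proposition \ref{prop: cont} can be rephrased as follows''). You simply make this step explicit via the isometric parametrization $B_T f=((I-\ii R)f, Rf)$ of $\widetilde\Gamma_T$ and the resulting projection estimate, which is a clean way to supply the omitted details.
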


The spectrum of $T_A$ consists only of the simple  eigenvalues $\lambda_{A,n}$, ${n\in\bZ}$.   The     function $\psi_{A_n}$ is an  eigenfunction of $T_A$ corresponding to the eigenvalue $\lambda_{A,n}$.  The eta invariant of $T_A$ is now easy to compute. For $s\in\bC$ we have
\[
\eta_A(s) := \sum_{\lambda>0} \frac{1}{\lambda^s}\Bigl(\dim\ker(\lambda-T_A)-\dim\ker(\lambda+T_A)\,\Bigr)
\]
\[
=\sum_{n\in\bZ\setminus \{-\omega_A\}} \frac{\sign\lambda_{A,n}}{|\lambda_{A,n}|^s}=\frac{L^s}{2\pi^s}\sum_{n\in\bZ\setminus \{-\omega_A\}}\frac{\sign\bigl(\,n+\omega_A\bigr)}{|n+\omega_A|^s}.
\]
Let
\begin{equation}
\rho_A:=\omega_A-\lfloor\omega_A\rfloor=\frac{A(L)}{2\pi}-\left\lfloor\frac{A(L)}{2\pi}\,\right\rfloor\in[0,1).
\label{eq: ro}
\end{equation}
If $\rho_A=0$ then $\eta_A(s)=0$ because in this case the spectrum of $T_A$ is symmetric about the origin. If $\rho_A\neq 0$ then we have
\[
\eta_A(s)=\frac{L^s}{2\pi^s}\Biggl(\,\sum_{n\geq 0} \frac{1}{(n+\rho_A)^s}-\sum_{n\geq 0}\frac{1}{(n+1-\rho_A)^s}\,\Biggr)=\frac{L^s}{2\pi^s}\Bigl(\,\zeta(s,\rho_A)-\zeta(s,1-\rho_A)\,\Bigr),
\]
where for every $a\in (0,1]$  we denoted by $\zeta(s, a)$ the  Riemann-Hurwitz zeta function
\[
\zeta(s,a)=\sum_{n\geq 0}\frac{1}{(n+a)^s}.
\]
The above series is convergent  for any $ s\in \bC$, $\res>1$ and admits an analytic continuation  to the puctured plane $\bC\setminus \{s=1\}$. Its value at the origin $s=0$ is given by Hermite's formula  \cite[13.21]{WW}
\begin{equation}
\zeta(0,a)=\frac{1}{2}-a.
\label{eq: herm}
\end{equation}
We deduce that $\eta_A(s)$ has an analytic continuation at $s=0$ and we have
\begin{equation}
\eta_A(0)=\left\{\begin{array}{ccl}
0 & {\rm if}& \rho_A=0,\\
1-2\rho_A & {\rm if} &\rho_A\in (0,1).
\end{array}
\right.
\label{eq: eta0}
\end{equation}
If we introduce  the  function
\[
\xi_A=\frac{1}{2}\bigl(\,\dim\ker T_A+\eta_A(0)\,\bigr),
\]
then we can rewrite the above equality in a more compact way
\begin{equation}
\xi_A=\frac{1}{2}(1-2\rho_A)=\frac{1}{2}-\rho_A.
\label{eq: xi}
\end{equation}

Suppose  we have $A_0,A_1\in \eA$. We set $A_s=A_0+s(A_1-A_0)\in \eA$.  The map $[0,1]\ni s\mapsto A_s\in \eA$   is continuous in the weak tooplogy on $\eA$ and thus the family of operators  $T_{A_s}$ is  continuous   with respect to the gap topology. The eigenvalues of the family $T_{A_s}$  can be organized in smooth families
\[
\lambda_{s,n}=\frac{2\pi}{L}(\omega_{s}+n)=\frac{2\pi}{L}\Bigl(\omega_{A_0}+s\bigl(\, \omega_{1}-\omega_{0}\,\bigr)+n\,\Bigr),\;\;\omega_s:=\omega_{A_s},;\;\forall s\in [0,1].
\]
Assume for simplicity that $\omega_0,\omega_1\not\in \bZ$, i.e., the operators $T_{A_0}$ and $T_{A_1}$ are invertible. Denote by $SF(A_1, A_0)$ the spectral flow of the affine family\footnote{The quantity $SF(A_1,A_0)$ is independent of the weakly continuous  path $A_s$ connecting $A_0$ to $A_1$ since the space $\eA$ equipped with the   weak topology is contractible. It is thus an invariant of the pair $(A_1,A_0)$.} $T_{A_s}$.   Then
\[
SF(A_1,A_0)=\#\{ n\in\bZ;\;\; \omega_0+n <0,\;\;\omega_1+n>0\}-\# \{ n\in \bZ;\;\;\omega_0+n>0,\;\;\omega_1+n<0\}
\]
\[
=\#\Bigl(\bZ\cap(\omega_0,\omega_1)\,\Bigr)-\#\Bigl(\bZ\cap(\omega_1,\omega_0 )\,\Bigr).
\]
We conclude
\begin{equation}
SF(A_1,A_0)=\bigl(\, \lfloor\omega_1\rfloor-\lfloor\omega_0\rfloor\,\bigr),\;\;\omega_i=\frac{A_i(L)}{2\pi}.
\label{eq:  sf}
\end{equation}
Using (\ref{eq: xi}) we deduce
\begin{equation}
SF(A_1,A_0)= \lfloor\omega_{A_1}\rfloor- \lfloor\omega_{A_0}\rfloor=\omega_{A_1}-\omega_{A_0}+\bigl(\,\xi_{A_1}-\xi_{A_0}\,\bigr).
\label{eq: sf-xi}
\end{equation}

\begin{remark}[Rescaling trick] Note that the rescaling
\[
[0,L_1]\ni \tau \mapsto t=\frac{\tau}{c}\in[0,L_0],\;\;c=\frac{L_1}{L_0}.
\]
induces an isometry $\eI_{L_1,\eL_0}: \bsH_{L_0}=L^2(0,L_0;\bC)\ra \bsH_{L_1}=L^2(0,L_1;\bC)$,
\[
\bsH_{L_0}\ni f(t) \mapsto \eI_{L_1,L_0}f(\tau):=c^{1/2}f\left(\frac{\tau}{c}\,\right)\in \bsH_{L_1}.
\]
The unbounded operator $\frac{d}{dt}$ on $\bsH_{L_0}$ is the conjugate to the operator $c\frac{d}{d\tau}$ on $\bsH_{L_1}$.  

If $\alpha(t)$ is a real bounded measurable function on $[0,L_0]$, then the  bounded operator on $\bsH_{L_0}$ defined by pointwise multiplication by $\alpha(t)$ is conjugate to the bounded operator on $\bsH_{L_1}$  defined by the multiplication by $a(\tau)=\alpha(\tau/c)$.  Hence   the  unbounded operator $D_b$ on $\bsH_{L_0}$ is conjugate to the unbounded operator $c D_{c^{-1}a}$  on $\bsH_{L_1}$,
\begin{equation}
c D_{c^{-1}a}= \eI_{L_1,L_0}D_\alpha\eI_{L_1,L_0}^{-1}.
\label{eq: conj-iso}
\end{equation}
 Its resolvent is obtained by solving the  periodic boundary value problem
\[
\ii u +c\left(-\ii \frac{d}{d\tau} +c^{-1}a(\tau) \right) u(\tau)= f(\tau),\;\;u(0)=u(L_1),
\]
or equivalently 
\[
\frac{d}{d\tau}u +c^{-1} \bigl(\, a(\tau) -\ii\,\bigr)u =c^{-1}\ii f,\;\;u(0)=u(L_1).
\]
If we set 
\[
A(\tau)=\int_0^\tau a(\si)d\si\;\;\mbox{and}\;\;\Phi_{A,c}(t)=c^{-1}\Phi_A(\tau)=c^{-1}(\ii A(\tau)-\tau),
\]
then we see that $R_\alpha$ is conjugate to the integral operator $R_{A,c}$
\[
R_{A,c} f(\tau)= \frac{c^{-1}\ii e^{-\Phi_{A,c}(\tau)}}{ e^{\Phi_{A,c}(L_1)}-1}\int_0^{L_1} e^{\Phi_{A,c}(\si)} f(s) ds+ c^{-1}\ii \int_0^t  e^{-(\Phi_{A,c}(\tau)-\Phi_A(\si)} f(\si) d\si.
\]
Arguing exactly as in the proof of Proposition \ref{prop: cont}  we deduce that if $A_n$ coverges very weakly to $A\in \eA_{L_1}$ and  the sequence of positive numbers $c_n$ converges to the positive number $c$ then $R_{A_n,c_n}$ converges in the operator norm to $R_{A,c}$.

For any $c>0$ and $A\in \eA$ we define  the operator
\[
T_{A,c}= R_{A,c}^{-1}-\ii,\;\;c>0.
\]
Note that $T_{A,c}= cT_{c^{-1}A}$. Then for every $c>0$ the spectrum  of  $T_{A,c}$ is
\[
\spec\bigl(\,T_{A,c}\,\bigr)=c\spec\bigl(\, T_{c^{-1}A}\,\bigr).\proofend
\]
\label{rem: resc}
\end{remark}

We want to  give a more intuitive description of the operators $R_A$,  and $T_A$ for a  large class of $A$'s.  We begin by introducing a nice subclass $\eA_*$ of $\eA$.  Let $H(t)$ denote the Heaviside function
\[
H(t)=\begin{cases}
1, & t\geq 0\\
0,& t<0.
\end{cases}
\]

\begin{definition} We say that $A\in \eA$ is   \emph{nice} if there exists $a\in L^\infty(0,L)$,  a finite subset $\eP\subset (0,L)$, and a  function $c:\eP\ra \bR$  such that if we define
\[
A_*(t):=\int_0^t a(s) ds
\]
then
\[
A(t)=   A_*(t)+\sum_{p\in \Delta} c(p)H(t-p),\;\;\forall  t\in [0,L].
\]
We denote by $\eA_*$ the  subcollection of  nice functions.\qed
\end{definition}

 Let us first point out   that $\eA_*$ is a vector subspace   of $\eA$. Next, observe that  $A\in \eA^*$ if and only if there exists a finite subset $\eP_A\subset (0,L)$ such that the restriction of $A$ to $[0,L]\setminus \eP$ is  Lipschitz continuous. In this case $A$  admits left and right limits at any point $t\in [0,L]$ and we define
 \[
 c:\eP_A \ra \bR,\;\;c(p)=\lim_{t\searrow p}A(t)-\lim_{t\nearrow p} A(t).
 \]
 Then
 \[
 A_*(t)= A(t)-\sum_{p\in\eP} c(p)H(t-p)
 \]
 is Lipschitz continuous, it is differentiable a.e. on $[0,L]$  and we define  $a$ to be the derivative of $A_*$.  
 
 Let us next observe that  if $A\in \eA_*$ then  the operator $T_A$ can be informally described as 
 \[
 T_A=-\ii\frac{d}{dt} +a(t)+\sum_{p\in\eP_A} c(p) \delta_p.
 \]
 In other words, $T_A$ would like to be   a  Dirac type operator  whose  coefficients are measures.    In the above informal discussion we left out a description of the domain  of $T_A$.  Below  we would like to give a precise   description of  $T_A$ as a closed unbounded selfadjoint operator defined by an elliptic boundary value problem.

For any partition   of $[0,L]$, $ \eP=\{ 0<t_1< \cdots <t_{n-1}<L\}$, we set
\[
 t_0:=0, \;\;t_n:=L,\;\;I_k :=[t_{k-1}, t_k],\;\;k=1,\dotsc, n.
 \]
We  define the Hilbert space
 \[
 \bsH_\eP=\bigoplus_{k=1}^n L^2(I_k,\bC),
 \]
 and the  Hilbert space isomorphism
 \[
 \eI_\eP:\bsH\ra \bsH_\eP,\;\; \bsH\ni f \mapsto \bigl(\,f|_{I_1},\dotsc, f|_{I_n}\,\bigr)\in \bsH_\eP.
 \]
  Let   $A\in \eA_*$  and $\eP$ be a  partition 
   \[
 \eP=\{ 0<t_1< \cdots <t_{n-1}<L\}
 \]
 that contains the set of discontinuities of $\eA$, $\eP\supset \eP_A$.  We set
 \[
 a=\frac{dA_*}{dt},\;\; ;a_k=a|_{I_k},\;k=1,\dotsc, n.
\]
For $j=1,\dotsc, n-1$ we denote by $c_j=c_j(A)$ the jump of $A$ at $t_j$,
 \[
 c_j=A(t_j^+)-A(t_j^-).
 \]
 Finally we define the closed  unbounded linear operator 
 \[
 L_{\eA,\eP}:\Dom(L_{A,\eP})\subset \bsH_\eP \ra \bsH_\eP, 
 \]
 where $\Dom(L_{A,\eP})$ consists of   $n$-uples  $(u_k)_{1\leq k\leq n}\in\bsH_\eP$ such that
 \begin{subequations}
 \begin{equation}
 u_k\in L^{1,2}(I_k),\;\;k=1,\dotsc, n,
 \label{eq: doma}
 \end{equation}
 \begin{equation}
u_{j+1}(t_j)=e^{-\ii c_j}u_j(t_j),\;\;j=1,\dotsc, n-1,
\label{eq: domb}
\end{equation}
\begin{equation}
 u_n(L)=u_1(0).
 \label{eq: domc}
 \end{equation}
 \end{subequations}
 and
 \begin{equation}
 L_{A,\eP}(u_1,\dotsc, u_n)= \Bigl(\, -\ii\frac{du_1}{dt}+a_1u_1,\dotsc,  -\ii\frac{du_n}{dt}+a_nu_n\,\Bigr).
 \label{eq: op}
 \end{equation}
 A standard argument shows that  $L_{A,\eP}$ is closed, densely defined and selfadjoint.  In particular, the operator  $(L_{A,\eP}+\ii)$ is invertible, with bounded inverse.

 \begin{theorem}  For any $A\in \eA_*$  and any partition 
 \[
  \eP=\{ 0<t_1< \cdots <t_{n-1}<L\}
  \]
  that contains the set of discontinuities of $\eA$ we have the equality
 \[
 L_{A,\eP}=\eI_\eP T_A\eI_\eP^{-1}.
 \]
 \label{th: split}
 \end{theorem}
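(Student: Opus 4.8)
The plan is to identify $\eI_\eP R_A\eI_\eP^{-1}$ with the bounded inverse of $L_{A,\eP}+\ii$; since $T_A=R_A^{-1}-\ii$ by definition, this yields $L_{A,\eP}=\eI_\eP T_A\eI_\eP^{-1}$ immediately. Fix $f\in\bsH$ and set $u:=R_Af$. Recalling $R_A=S_A+K_A$ from (\ref{eq: r2}), formula (\ref{eq: r1})---which now simply reads off the kernels $\eS_A,\eK_A$ and is therefore valid for every $A\in\eA$---lets us write $u(t)=e^{-\Phi_A(t)}G(t)$, where $\Phi_A(t)=\ii A(t)-t$ and
\[
G(t):=\frac{\ii}{e^{\Phi_A(L)}-1}\int_0^L e^{\Phi_A(s)}f(s)\,ds+\ii\int_0^t e^{\Phi_A(s)}f(s)\,ds.
\]
Because $e^{\Phi_A}\in L^\infty$ by (\ref{eq: bounded}) and $f\in L^2\subset L^1$, the function $G$ is absolutely continuous on $[0,L]$ with $G'=\ii e^{\Phi_A}f$, while $e^{-\Phi_A}$ is right continuous; so $u$ is right continuous and $G$ is continuous.

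Next I would check that $\eI_\eP u\in\Dom(L_{A,\eP})$ and that $(L_{A,\eP}+\ii)\eI_\eP u=\eI_\eP f$. On each interval $I_k$ the function $\Phi_A$ differs from $\ii A_*-t$ by a constant, hence is Lipschitz there, so $e^{-\Phi_A}|_{I_k}$ is Lipschitz and $u|_{I_k}=(e^{-\Phi_A}G)|_{I_k}\in L^{1,2}(I_k)$: this is (\ref{eq: doma}). Differentiating on the interior of $I_k$ gives $u'=-\Phi_A'u+\ii f=(1-\ii a)u+\ii f$ a.e. (using $\Phi_A'=\ii a-1$ there), and multiplying by $-\ii$ yields $-\ii u_k'+a_ku_k+\ii u_k=f_k$, which is exactly $(L_{A,\eP}+\ii)\eI_\eP u=\eI_\eP f$. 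For the matching condition (\ref{eq: domb}): $G$ is continuous at $t_j$ while $\Phi_A$ jumps by $\ii c_j$, so $u_{j+1}(t_j)=u(t_j^+)=e^{-\Phi_A(t_j^+)}G(t_j)=e^{-\ii c_j}e^{-\Phi_A(t_j^-)}G(t_j)=e^{-\ii c_j}u_j(t_j)$. For (\ref{eq: domc}): since $A(0)=0$ we get $u_1(0)=G(0)$, while the identity $\frac{1}{e^{\Phi_A(L)}-1}+1=\frac{e^{\Phi_A(L)}}{e^{\Phi_A(L)}-1}$ together with the fact that $A$ has no jump at $L$ gives $u_n(L)=e^{-\Phi_A(L)}G(L)=\frac{\ii}{e^{\Phi_A(L)}-1}\int_0^L e^{\Phi_A(s)}f(s)\,ds=G(0)$, hence $u_n(L)=u_1(0)$.

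To finish, I would invoke the already-quoted fact that $L_{A,\eP}+\ii$ is a bijection $\Dom(L_{A,\eP})\to\bsH_\eP$ with bounded inverse. We have just shown that the bounded operator $\eI_\eP R_A\eI_\eP^{-1}$ maps $\bsH_\eP$ into $\Dom(L_{A,\eP})$ and satisfies $(L_{A,\eP}+\ii)\,\eI_\eP R_A\eI_\eP^{-1}=\mathrm{id}$; a bounded right inverse of an invertible operator is its inverse, so $(L_{A,\eP}+\ii)^{-1}=\eI_\eP R_A\eI_\eP^{-1}$. Consequently $\Dom(L_{A,\eP})=\eI_\eP\Dom(T_A)$, and inverting both sides, $L_{A,\eP}=\eI_\eP R_A^{-1}\eI_\eP^{-1}-\ii=\eI_\eP(R_A^{-1}-\ii)\eI_\eP^{-1}=\eI_\eP T_A\eI_\eP^{-1}$.

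There is no deep obstacle here; the real content is structural and has already been arranged, namely that $R_A$ was built from the integral kernels $\eS_A,\eK_A$ so that formula (\ref{eq: r1}) persists when $a$ is replaced by the ``density'' of a measure with atoms at $\eP_A$. The one place that demands care is the bookkeeping of one-sided limits and the phase factors $e^{-\ii c_j}$ at the partition points; working with the right-continuous representative of $u$ and with $\Phi_A$ (rather than with $a$ directly) makes this transparent, and one should note that partition points $t_j\notin\eP_A$ simply carry $c_j=0$, so the argument is uniform in the choice of $\eP\supset\eP_A$. Selfadjointness of $L_{A,\eP}$ is used only through the invertibility of $L_{A,\eP}+\ii$, and no spectral or compactness input beyond $T_A=R_A^{-1}-\ii$ is needed.
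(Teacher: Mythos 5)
Your proof follows essentially the same route as the paper: both verify, using the explicit kernel formula for $R_A$, that $u=R_Af$ lands in $\Dom(L_{A,\eP})$ and that $(L_{A,\eP}+\ii)\eI_\eP u=\eI_\eP f$, then invoke invertibility of $L_{A,\eP}+\ii$ to upgrade the right-inverse identity to equality of operators. Your write-up is somewhat more careful about the factorization $u=e^{-\Phi_A}G$ and the one-sided limits, and in passing corrects what looks like a stray $\ii$ in the paper's displayed jump relation $u(t_j^+)=e^{-\ii(\Phi_A(t_j^+)-\Phi_A(t_j^-))}u(t_j^-)$ (which should read $u(t_j^+)=e^{-(\Phi_A(t_j^+)-\Phi_A(t_j^-))}u(t_j^-)=e^{-\ii c_j}u(t_j^-)$, as you have it).
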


 \begin{proof} For simplicity we write $L_A$ instead of $L_{A,\eP}$. We will prove the equivalent statement
 \[
 (\ii+L_A)^{-1} = \eI_\eP (T_A+\ii)^{-1}\eI_\eP^{-1}=\eI_\eP R_A\eI_{\eP}^{-1}.
 \]
 In other words we have to prove that  for any  $u, f\in \bsH$  if  $u=R_Af$, then $u\in \Dom(L_A)$ and  $(L_A+\ii)\eI_\eP u= \eI_\eP f$. More precisely, we have to show that  the collection $\eI_Au=(u_k)_{1\leq k\leq n}$ satisfies (\ref{eq: doma}--\ref{eq: domc})  and (\ref{eq: op}). Using  (\ref{eq: r1}) we deduce
 \begin{equation}
u(t) = \frac{\ii e^{-\Phi_A(t)}}{ e^{\Phi_A(L)}-1}\int_0^L e^{\Phi_A(s)} f(s) ds+ \ii e^{-\Phi_A(t)}\int_0^t  e^{\Phi_A(s)} f(s) ds.
 \label{eq: r3}
 \end{equation}
 This implies   the condition (\ref{eq: doma}).  The  condition  (\ref{eq: op}) follows by direct computation using (\ref{eq: r3}).

 Next, we observe  that
 \[
 u(t_j^+)=\frac{\ii e^{-\Phi_A(t_j^+)}}{ e^{\Phi_A(L)}-1}\int_0^L e^{\Phi_A(s)} f(s) ds+\ii e^{-\Phi_A(t_j^+)}\int_0^{t_j}  e^{\Phi_A(s)} f(s) ds,
 \]
 \[
 u(t_j^-)=\frac{\ii e^{-\Phi_A(t_j^-)}}{ e^{\Phi_A(L)}-1}\int_0^L e^{\Phi_A(s)} f(s) ds+\ii e^{-\Phi_A(t_j^-)}\int_0^{t_j}  e^{\Phi_A(s)} f(s) ds,
 \]
 from which we conclude that
 \[
 u(t_j^+)= e^{-\ii\bigl(\,\Phi_A(t_j^+)-\Phi_A(t_j^-)\,\bigr)}u(t_j^-),\;\;\forall j=1,\dotsc n-1.
  \]
  This proves (\ref{eq: domb}).  The equality  (\ref{eq: domc})   follows directly from (\ref{eq: r2}). \end{proof}

 \begin{remark} We would like to place the above  operator $L_A$  in a broader perspective that  we will use  extensively in Section \ref{s: 4}.  Consider   a compact, oriented   $1$-dimensional manifold with boundary $I$. In other words $I$ is a disjoint union of finitely many compact intervals
 \[
 \bsI=\sqcup_{k=1}^n I_k.
 \]
 If $I_k:=[a_k,b_k]$, $a_k<b_k$, then we set 
 \[
 \pa_+I_k :=\{b_k\}, \;\;\pa_- I_k:=\{a_k\},\;\;\pa_+\bsI:=\{b_1,\dotsc, b_n\},\;\;\pa_- \bsI:=\{a_1,\dotsc, a_n\}.
 \]
 In particular, we have a direct sum decomposition of (finite dimensional) Hilbert spaces
 \[
 \bsE:=L^2(\pa I,\bC)=L^2(\pa_+ \bsI)\oplus L^2(\pa_- \bsI)=:\bsE_+\oplus \bsE_-.
 \]
 On the space $C^\infty(\bsI,\bC)$  of smooth  complex valued  functions on $\bsI$ we have a canonical,  symmetric Dirac $\eD$ operator   described  on each $I_k$ by $-\ii\frac{d}{dt}$. Let $\sigma$ denote the principal symbol of this operator. If $\bnu_*$ denotes the  \emph{outer} conormal  to the boundary. We then get  an operator
 \[
 J=\si(\bnu_*):L^2(\pa \bsI,\bC)\ra L^2(\pa \bsI,\bC).
 \]
 It is a unitary operator satisfying $J^2=-1$, $\ker(\ii+J)=\bsE_+$, and  $\ker(\ii-J)=\bsE_-$. It thus defines a Hermitian symplectic structure in the sense of \cite{Ar1, Cib, N1}. A  (hermitian) lagrangian subspace of $\bsE$ is then  a complex subspace $L$ such that $L^\perp=JL$. We denote by $\Lag(\bsE,J)$ the Grassmannin of hermitian lagrangian spaces.   We denote by $\Iso(\bsE_+,\bsE_-)$ the  space of  linear isometries $\bsE_+\ra \bsE_-$. As explained in \cite{Ar1} there exists a natural bijection\footnote{There are various conventions in the definition of this bijection. We follow the conventions in \cite{Cib}.}
 \[
 \Iso(\bsE_+,\bsE_-)\ra \Lag(\bsE),\;\; \Iso(\bsE_+,\bsE_-)\ni T\longmapsto \Gamma_T
 \]
 where $\Gamma_T$  is the graph of $T$ viewed as a subspace of  $\bsE$. Our spaces $\bsE_\pm$ are equipped with natural bases and through these  bases we can identify $\Iso(\bsE_+,\bsE_-)$ with the unitary group $U(n)$.  We denote by $\Delta$ the Lagrangian subspace  corresponding to the identity operator.
 
 Any subspace $V\subset \bsE$ defines a  Fredholm operator
 \[
 \eD_V: \Dom(\eD_V)\subset L^2(\bsI,\bC)\ra L^2(\bsI,\bC),
 \]
 where
 \[
 \Dom(\eD_V) =\bigl\{\, u\in L^{1,2}(\bsI,\bC);\;\;u|_{\pa I}\in V\,\bigr\},\;\;\eD_V u=\eD u.
 \]
 The index of this operator  is
 \[
 i_V = \dim(V\cap \Delta) -\dim (\Delta\cap JV^\perp)= \dim(V\cap \Delta) -\dim (J\Delta\cap V^\perp)
 \]
 \[
 = \dim(V\cap \Delta) -\dim (\Delta^\perp\cap V^\perp)=\dim V-\codim \Delta= \dim V-n.
 \]
 A simple  argument shows that $\eD_V$ is selfadjoint if and only if $V\in \Lag(\bsE)$.  As we explained  above,  in this case $V$ can be identified with the graph of an isometry $T:\bsE_+\ra \bsE_-$. We say that $T$   is the \emph{transmission  operator} associated to the  selfadjoint boundary value problem.
 
 For example, if in Theorem \ref{th: split} we let $ A(t)=\sum_{j=1}^{n-1} c_jH(t-t_j)$, then we see   that the operator $L_A$ can  be identified with the operator $\eD_{\Gamma_T}$, where the transmission operator  $T\in \Iso(\bsE_+,\bsE_-)$ is given by the unitary $n\times n$ matrix
 \[
 T=\left[\begin{array}{llrccc}
 0 & 0 & 0 & \cdots & 0 & 1\\
 e^{-\ii c_1} &0 & 0&\cdots & 0 & 0\\
 0 & e^{-\ii c_2} & 0 &\cdots &0 & 0\\
 \vdots & \vdots & \vdots & \vdots & \vdots &\vdots\\
  \cdot & \cdot & \cdot & \cdot & \cdot &\cdot\\
 0 &0 & 0& \cdots & e^{-\ii c_{n-1}} & 0
 \end{array}
 \right].
 \]
\qed
\label{rem: bvp}
 \end{remark}

\section{The Atiyah-Patodi-Singer theorem}
\label{s: 2}
\setcounter{equation}{0}
We     review    here the Atiyah-Patodi-Singer index theorem for  Dirac operators on  manifold with boundary,    when the metric is not assumed to be   cylindrical   near the boundary.    Our presentation  follows closely,  \cite{Gil, Grubb}, but we    present  a few more details since  the various orientation  conventions  and the terminology in \cite{Gil, Grubb} are different from those in  \cite{BGV, N} that we use throughout this paper.

Suppose  $(\widehat{M}, \hat{g})$ is a compact, oriented Riemann, and  $M\subset \widehat{M}$ be  a  hypersurface in $\widehat{M}$ co-oriented by a unit normal vector field $\bnu$  along $M$. Let $n:=\dim M$ so that $\dim\widehat{M}=n+1$. We denote by $g$  the induced metric on $M$.   We first want to define  a canonical restriction to $M$  of a Dirac operator on $\hat{M}$.

 Let $\exp^{\hat{g}}: T\widehat{M}\ra \widehat{M}$ denote the exponential map determined by the metric $\hat{H}$.   For sufficiently small $\ve>0$ the map
\[
(-\ve,\ve)\times M\ni (t,p)\mapsto  \exp^{\hat{g}}_p\bigl(\,t\bnu(p)\,\bigr)\in \widehat{M}
\]
 is a diffeomorphism onto  a small open tubular neighborhood $\eO_\ve$ of $M$.     The metric  $g$ determines a cylindrical metric $dt^2+g$ on $(-\ve,\ve)\times M$.   Via  the above diffeomorphism  we get a metric $\hat{g}_0$ on $\eO_\ve$. We say that  $\hat{g}_0$ is the \emph{cylindrical approximation} of $\hat{g}$ near $M$. 

  We denote by $\widehat{\nabla}$ the Levi-Civita connection  of the metric  $\hat{g}$ and by $\widehat{\nabla}^0$ the Levi-Civita connection of the metric $\hat{g}_0$.  We set
 \[
 \bXi:=\widehat{\nabla}-\widehat{\nabla}^0\in \Omega^1\bigl(\,\eO_\ve,\,\End (\, T\widehat{M}\,)\,\bigr).
 \]
 To get a more explicit description of $\bXi$ we fix a local oriented, $g$-orthonormal frame $(\be_1,\dotsc, \be_n)$ on $M$.  Together with the unit  normal vector field $\bnu$  we obtain a local   oriented orthonormal frame  $(\bnu,\be_1,\dotsc, \be_n)$ of $T\widehat{M}|_{M}$. We extend  it by parallel transport  along the geodesics orthogonal to $M$ to  a local, oriented orthonormal    frame   $( \hat{\bnu},\hat{\be}_1,\dotsc, \hat{\be}_n)$ of $T\widehat{M}$.

 Denote by $\widehat{\bom}$ the connection form   associated to $\widehat{\nabla}$  by this frame, and by $\widehat{\bthet}$ the connection form   associated to $\widehat{\nabla}^0$  by this frame. 
 We can represent both $\widehat{\bom}$ and $\widehat{\bthet}$ as  skew-symmetric $(n+1)\times (n+1)$ matrices
 \[
 \widehat{\bom}=\bigl(\, \widehat{\bom}^i_j\,\bigr)_{0\leq i,j\leq n},\;\;\widehat{\bthet}=\bigl(\, \widehat{\bthet}^i_j\,\bigr)_{0\leq i,j\leq n},
 \]
where the entries are $1$-forms. Then $\bXi=\widehat{\bom}-\widehat{\bthet}$.

We set $\hat{\be}_0:=\hat{\bnu}$,  and  we denote by $(\hat{\be}^k)_{0\leq k\leq n}$ the dual orthonormal frame of $T^*\widehat{M}$.Then we have
\[
\hat{\bom}^i_j=\hat{\bom}^i_{kj}\hat{\be}^k,\;\; \hat{\bthet}^i_j=\hat{\bthet}^i_{kj}\hat{\be}^k,\;\;\widehat{\nabla}_k\hat{\be}_j =\hat{\bom}^i_{kj}\hat{\be}_i,\;\; \widehat{\nabla}_k^0\hat{\be}_j =\hat{\bthet}^i_{kj}\hat{\be}_i,\;\;\forall 0\leq j,k\leq n,
\]
where we  have used  Einstein's summation convention.

Observe that $\widehat{\nabla}^0\hat{\be}_0=0$ so that $\hat{\bthet}_0^i=\hat{\bthet}^0_i=0$.  Also,
\[
\hat{\bom}^i_{jk}=\hat{\bthet}^i_{jk},\;\;\forall 1\leq i,j,k\leq n.
\]
If we write     
\[
\bXi=\bigl(\bXi^i_j\,\bigr)_{0\leq i,j\leq n},\;\;\bXi^i_j=\bXi^i_{jk}\hat{\be}^k,
\]
and we let $o(1)$ denote any quantity that vanishes along  $M$. then we have
\begin{equation}
\bXi^i_j=-\bXi^j_i,\;\;\forall 0\leq i,j\leq n,
\label{eq: vanish}
\end{equation}
\begin{equation}
\bXi^i_{kj}=o(1),\;\;\forall 1\leq i,j\leq n,\;\;0\leq k\leq n.
\label{eq: sym}
\end{equation}
We set
\[
\bXi_{kij}:= \bXi^i_{kj}=\hat{g}\bigl( \widehat{\nabla}_k\hat{\be}_j, \hat{\be}_i\,\bigr),\;\;\hat{\bom}_{ij}=\hat{\bom}^i_j,\;\;\hat{\bthet}_{ij}=\hat{\bthet}^i_j,\;\; \bom_{kij}=\bom^i_{kj},\;\;\bthet_{kij}=\bom^i_{kj}.
\]
We denote by $Q$   the second fundamental form\footnote{Our  definition  of the second fundamental form differes by a  sign from the usual definition. With our definition the  round sphere  cooriented by the outer normal has positive mean curvature.} of the embedding $M\hra \widehat{M}$,
\[
Q(e_i,e_j)= g(\nah_{\be_i}\bnu,\be_j).
\]
 Along the boundary we have the equalities
\begin{subequations}
\begin{equation}
\Xi_{kj0}=\bXi_{jk0}=-\bXi_{k0j}=Q(\be_j,\be_k)\;\;\forall 1\leq i,j\leq n,
\label{eq:  second}
\end{equation}
\begin{equation}
\Xi_{ij0}=0,\;\;\forall 0\leq i,j\leq n .
\label{eq: van2}
\end{equation}
\end{subequations}

To understand  the  nature of the   restriction to a hypersurface  of a Dirac operator we begin with a special case.  Namely, we assume that $\widehat{M}$ is equipped with a $spin$ structure.   We denote by  $\hat{\bS}$ the associated  complex spinor bundle so that $\widehat{\bS}$ is $\bZ/2$-graded is $\dim \widehat{M}$ is even, and ungraded  otherwise.     We have a Clifford multiplication
\[
\hbc: T^*M\ra \End (\hat{\bS}).
\]
The metrics $\hat{g}$     and $\hat{g}_0$ define connections $\widehat{\nabla}^{spin}$ and $\widehat{\nabla}^{spin,0}$ on $\hat{\bS}|_{\eO_\ve}$. Using the local frame $(\hat{\be}_i)_{0\leq i, j\leq n}$   we can write
\[
\widehat{\nabla}^{spin}_k =\pa_k-\frac{1}{4} \hat{\bom}_{kij}\hbc(\hat{\be}^i)\hbc(\hat{\be}^j),\;\;\widehat{\nabla}_k^{spin,0}=\pa_k-\frac{1}{4} \hat{\bthet}_{kij}\hbc(\hat{\be}^i)\hbc(\hat{\be}^j),
\]
where we again use Einstein's  summation convention.

Using the     connections $\widehat{\nabla}^{spin}$ and $\widehat{\nabla}^{spin,0}$ we obtain two Dirac operators $\hat{D}$ and respectively $\hat{D}_0$ on  $\widehat{\bS}|_{\eO_\ve}$
\[
\hat{D}= \sum_{i=0}^n\hbc(\hat{e}^i)\widehat{\nabla}^{spin}_i,\;\; \hat{D}_0=\sum_{i=0}^n \hbc(\hat{e}^i)\widehat{\nabla}^{spin,0}_i.
\]
Identifying $\eO_\ve$ with $(-\ve,\ve)\times M$  we obtain a projection 
\[
\pi:\eO_\ve \ra M.
\]    
We set  $\bS:=\hat{\bS}|_M$. The parallel transport  given by $\nah^{spin}$ yields  a bundle isomorphism $\hat{\bS}|_{\eO_\ve}\cong \pi^*\bS$. Using these identifications  we can rewrite  the operators $\hat{D}$ and $\hat{D}_0$    as
\[
\hat{D}=\hbc(\hat{\be}_0)\bigl(\,\widehat{\nabla}^{spin}_0- D(t)\,\bigr): C^\infty(\pi^*\bS)\ra C^\infty(\pi^* \bS),
\]
\[
\hat{D}_0=\hbc(\hat{\be}^0)\bigl(\,\pa_0- D_0(t)): C^\infty(\pi^*\bS)\ra C^\infty(\pi^* \bS).
\]
The operators $D(t)$ and $D_0(t)$ are first order   differential operators $C^\infty(\widehat{\bS}|_{\{t\}\times M})\ra  C^\infty(\widehat{\bS}|_{\{t\}\times M})$ and thus  can be viewed as   $t$-dependent operators  on  $\bS$. 

The operator $D_0(t)$ is in fact independent of $t$ and thus we can identify it with a Dirac operator  on $C^\infty(\bS)\ra C^\infty(M)$.  It is called  the \emph{canonical restriction} of $\hat{D}$ to $M$, and   we will denote it by $\eR_M(\hat{D})$.This operator is intrinsic to  $M$.  More precisely  when  $\dim \widehat{M}$ is even then $\bS$ is the direct sum of two copies  of the spinor bundle  on $M$ and the operator $D_0$ is the direct sum of two copies of the $spin$-Dirac operator  determined by the Riemann metric on  $M$.

When $\dim\widehat{M}$ is odd  then $\bS$ is the spinor bundle on $M$ and $D_0$ is the $spin$-Dirac operator  determined by the metric on the boundary and the induced  $spin$ structure.  We would like to express $\eR_M(\hat{D})$ in terms of $D(t)|_{t=0}$.

Let $\bnu_*:=\hbe^0\in  C^\infty\bigl(\, T^*\widehat{M}|_{\pa \widehat{M}}\,\bigr)$, set  $J:=\hbc(\bnu_*)$ and define $\bc:  T^*M\ra \End(\bS)$ by setting
\[
\bc(\alpha)=\hbc(\,\bnu_*(p)\,)\hbc(\alpha)=J\hbc(\alpha),\;\;\forall p\in M,\;\;\alpha\in T^*M\subset (T^*\widehat{M})|_M.
\]
Observe first that
\[
\eR_M(\hat{D})=D_0(t)=  \pa_0 +J \hat{D}_0.
\]
Next we observe that
\[
\nah^{spin}-\nah^{spin,0}= -\frac{1}{4}\sum_{i,j}\bXi_{ij}\hbc(\hat{\be}^i)\hbc(\hat{\be}^j).
\]
so that
\[
\nah^{spin}_0-\nah^{spin,0}_0=\nah^{spin}_0-\pa_0= -\frac{1}{4}\Xi_{0ij}J\hbc(\hbe^i)\hbc(\hbe^j)=o(1),
\]
\[
\hat{D}-\hat{D}_0= -\frac{1}{4}\sum_{i,j,k} \bXi_{kij}\hbc(\hat{\be}^k)\hbc(\hat{\be}^i)\hbc(\hat{\be}^j)=:\eL.
\]
We denote by  $\eL(t)$ the restriction of $\eL$ to the slice $\{t\}\times M$ so that $\eL(t)$ is an endomorphism of $\widehat{\bS}|_{\{t\}\times M}$.

Hence
\[
\hat{D}=J\pa_0- JD(t) ,\;\;D(t)= J\hat{D}+\pa_0=  J\hat{D}_0+\pa_0+ J\eL =D_0(t)+J\eL,
\]
so that
\[
D(0)=\eR_M(\hat{D})+J\eL(t)|_{t=0}.
\]
Thus, we need to compute  the endomorphism $J\eL(t)|_{t=0}$. We have
\[
J\eL=-\frac{1}{4}\sum_{i,j,k}J\bXi_{kij}\hbc(\hbe^k)\hbc(\hbe^i)\hbc(\hbe^j).
\]
There are many cancellations in the above sum.  Using (\ref{eq: sym}) we deduce that the   terms corresponding to $k=0$ vanish. Using (\ref{eq: vanish}) we deduce that the terms corresponding to $i,j>0$ or $i=j$ also vanish along the boundary. Thus
\[
J\eL=-\frac{1}{4}\sum_{i\neq j,k\neq 0}\bXi_{kij}J\hbc(\hbe^k)\hbc(e^i)\hbc(e^j)+o(1)=-\frac{1}{2}\sum_{i>j, k>0}\bXi_{kij}J\hbc(\hbe^k)\hbc(e^i)\hbc(e^j)+o(1)
\]
\[
=-\frac{1}{2}\sum_{i>0,k>0}\bXi_{ki0}J\hbc(\hbe^k)\hbc(\hbe^i)\hbc(\hbe^0)+o(1).
\]
Using the equalities $J=\hbc(\hbe^0)$, $J\hbc(\be^\ell)=-\hbc(\hbe^k)J$ for $\ell>0$ we deduce
\[
J\eL=\frac{1}{2}\sum_{i,k>0}\bXi_{ki0}\hbc(\be^k)\hbc(\hbe^j)=-\frac{1}{2}\sum_{j>0}\bXi_{ii0}+o(1)=-\frac{1}{2}\tr Q.
\]
The scalar $\tr Q$ is the  mean curvature of  $M\hra \widehat{M}$ and we denote it by $h_M$.  Hence
\begin{equation}
D(t)|_{t=0}= \eR_M(\widehat{D})-\frac{1}{2}h_M.
\label{eq: red}
\end{equation}
A similar equality was  proved   in \cite[Lemma 4.5.1]{KM}, although  in \cite{KM}  they use  a different definition for the induced Clifford  multiplication on the boundary that leads to  some sign differences.

If now  $\widehat{E}\ra \widehat{M}$ is a hermitian vector bundle over  $\widehat{M}$ and $\nah^E$ is  a Hermitian connection on $\widehat{E}$ then we obtain in standard  fashion a twisted Dirac operator $\hat{D}_E: C^\infty(\widehat{\bS}\otimes \widehat{E})\ra C^\infty(\widehat{\bS}\otimes\widehat{E})$. Using the parallel transport given by $\nah^E$ we obtain an isomorphism
\[
\widehat{E}|_{\eO_\ve}\cong \pi^*E,\;\;\mbox{where}\;\;E:=\widehat{E}|_{M}.
\]
Along $\eO_\ve$ the operator $\widehat{D}_E$ has the form
\[
\widehat{D}_E=J(\pa_t -D_E(t)).
\]
If on $\eO_\ve$ we replace the metric $\hat{g}$ with its cylindrical approximation $\hat{g}_0$ we obtain a new    Dirac operator
\[
\widehat{D}_{E,0}: C^\infty\bigl(\, \pi^*(\bS\otimes E)\,\bigr)\ra C^\infty(\,\pi^*\bigl(\bS\otimes E)\,\bigr)
\]
which along the boundary has the form $J(\pa_t -D_{E_0})$, where $D_{E,0}: C^\infty(\bS\otimes E)\ra C^\infty(\bS\otimes E)$. We set $\eR_M(\widehat{D}_E):= D_{E,0}$ and as before we obtain the identity
\begin{equation}
D_E(t)|_{t=0}=\eR_M(\widehat{D}_E)-\frac{1}{2}h_M.
\label{eq: red1}
\end{equation}
This is a purely local result so that a similar formula  holds for the geometric Dirac operators determined by a $spin^c$ structure.

We want to apply the above discussion to a  very special case. Consider a compact oriented surface $\Sigma$ with possibly disconnected boundary $\pa \Sigma$. \emph{We think of $\pa \Sigma$ as a hypersurface in $\Sigma$ cooriented  by the outer normal.} 

Fix   a Riemann metric $\hg$ on $\Sigma$, smooth up to the boundary.    Denote  by $s$ the arclength  coordinate  on a component $\pa_0\Sigma$ of the  boundary.       As before we can identify an open neighborhood $\eO$ of this component of the  boundary with a cylinder  $(-\ve,0]\times  S^1$.  In this   neighborhood the metric $\hg$ has the form
\[
\hg= dt^2+ w^2 ds^2
\]
where $w:(-\ve,0]\times S^1\ra (0,\infty)$ is a smooth positive function in the variables  $t,s$ such that $w(0,s)=1$, $\forall s$.

 The metric and the orientation on $\Sigma$ defines  an integrable almost complex structure  $J:T\Sigma\ra T\Sigma$. More precisely, $J$ is given by the  counterclockwise rotation by $\pi/2$.  We denote by $K_\Sigma$ the canonical complex line bundle determined by $J$.       We get  a  Dolbeault  operator
 \[
 (\bpar+\bpar^*) : C^\infty(\uc_\Sigma\oplus K_\Sigma^{-1})\ra C^\infty(\uc_\Sigma\oplus K_\Sigma^{-1}).
 \]
 We regard this as the  Dirac operator defined by  the metric $\hg$, a $spin^c$ structure. The twisting line bundle is $K_{\Sigma}^{-1/2}$, where the connection on $K_\Sigma$ is the connection induced by the Levi-Civita  connection of the metric $\hg$.  We analyze the form of $\bpar: C^\infty(\uc_\Sigma)\ra C^\infty(K_\Sigma^{-1}) $on the cylindrical neighborhood  $\eO$.   We set
 \[
 \be^0=dt,\;\; \be^1=wds.
 \]
 Then $\{\be^0,\be^1\}$ is an oriented, orthonormal frame of $T^*\Sigma|_{\eO}$.  We denote by $\{\be_0,\be_1\}$ its dual frame of $T\Sigma$.     We let $\bc:T^*\Sigma \ra \End(\uc_\Sigma\oplus K_\Sigma)$ be the Clifford multiplication    normalized by the  condition that the operator $dV:=\bc(\be^0)\bc(\be^1)$ on $\uc_\Sigma\oplus K_\Sigma^{-1}$ has the block decomposition \cite[\S 3.2]{BGV},
 \begin{equation}
 \bc(\be^0)\bc(\be^1)=\left[
 \begin{array}{cc}
 -\ii &0\\
 0&\ii
 \end{array}
 \right].
 \label{eq: grad}
 \end{equation}
The Levi-Civita  induces a natural connection on on $K_\Sigma^{-1}$ and if we use the trivial connection on  $\uc_\Sigma$ we get a connection $\nabla$ on $\uc_\Sigma\oplus K_\Sigma^{-1}$. The associated  Dirac operator   is  $D_\Sigma=\sqrt{2}(\bpar+\bpar^*)$.     The even part   of this operator is
\[
D^+=\sqrt{2}\bpar: C^\infty(\uc_\Sigma)\ra C^\infty(K_\Sigma^{-1}).
\]
We want to compute its canonical restriction to the boundary. 

The Levi-Civita connection    $\nah$  determined by $\hg$ is described on $\eO$ by a $1$-form $\omega$  uniquely determined by  Cartan's structural equations
 \[
\nah=d+ \left[
\begin{array}{cc}
0 & -\omega\\
\omega & 0
\end{array}
\right],\;\; d\left[
 \begin{array}{c}
 e^0\\
 \be^1
 \end{array}
 \right]=\left[
 \begin{array}{cc}
 0 & \omega\\
 -\omega &0
 \end{array}
 \right]\wedge d\left[
 \begin{array}{c}
 e^0\\
 \be^1
 \end{array}
 \right].
 \]
 We deduce $\omega= a \be^1$, $a\in C^\infty(\eO)$ and from the  equality
 \[
 \frac{w'_t}{w} \be^0\wedge \be^1= w'_t dt\wedge ds = d\be^1= a\be^0\wedge \be_1
 \]
 we conclude  $a =\pa_t\log w$ so that
 \[
 \omega=  \pa_t(\log w) \be^1 = w'_t ds.
 \]
The  mean curvature $h$  of the boundary component $\pa_0\Sigma$ is the restriction to $t=0$ of the function $w'_t$.  The Riemann curvature is  described by the  matrix
 \[
 \left[
 \begin{array}{cc}
 0 & -d\omega\\
 d\omega &0
 \end{array}
 \right]= \left[
 \begin{array}{cc}
 0 & - w''_{t}dt \wedge ds\\
 w''_tdt\wedge ds &0
 \end{array}
 \right]= \left[
 \begin{array}{cc}
 0 & -\frac{w''_t}{w}\\
 \frac{w''_t}{w}& 0
 \end{array}
 \right] \be^0\wedge \be^1.
 \]
 If we denote by $\pa$ the trivial connection on $\uc_\Sigma$ then we deduce
 \[
 D^+_\Sigma:=\bc(\be^0)\pa_{\be_0}+\bc(\be^1)\pa_{\be_1}= \bc(\be_0)\bigl( \pa_t -\bc(\be^0)\bc(\be^1)\pa_{\be_1}\,\bigr)
 \]
 so that
 \[
 D^+_\Sigma(t)=\bc(\be^0)D^+_\Sigma+\pa_t= \bc(\be^0)\bc(\be^1)\pa_{\be_1}\stackrel{(\ref{eq: grad})}{=}-\ii\pa_{\be_1}.
 \]
 Above, the operator $D^+_\Sigma(t)$ is,  \emph{canonically}, a differential operator
 \[
 D^+_\Sigma(t): C^\infty(\uc_{\pa\Sigma})\ra C^\infty(\uc_{\pa \Sigma}),
 \]
 where $\uc_{\pa\Sigma}$ denotes the trivial complex line bundle over $\pa\Sigma$. The boundary restriction  is then according to (\ref{eq: red1})
 \begin{equation}
\eR_{\pa\Sigma}(\bpar)=D^+_\Sigma(t)+\frac{1}{2}h= -\ii\pa_{\be_1}+\frac{1}{2} h.
 \label{eq: br}
 \end{equation}
 Let us observe that along the boundary we have $\pa_{\be_1}=\pa_s$.

Consider the Atiyah-Patodi-Singer  operator
\[
\bpar_{APS}:\Dom(\bpar_{APS})\subset L^2(\Sigma)\ra L^2(\Sigma),\;\;\bpar_{APS}u=\bpar u,\forall u\in \Dom(\bpar_{APS}),
\]
where
\[
\Dom(\bpar_{APS})=\{u\in L^{1,2}(\Sigma, \bC);\;\;u|_{\pa \Sigma}\in \Lambda^-_\pa\,\bigr\},
\]
and  $\Lambda^-_\pa$ is the closed subspace of $L^2(\pa \Sigma)$ generated by the eigenvectors of the  operator $B:=\eR_{\pa\Sigma}(\bpar)$ corresponding to  strictly negative eigenvalues.

The index theorem of \cite{Gil, Grubb} implies  $\bpar_{APS}$ is Fredholm and
\[
i_{APS}(\Sigma,g):={\rm index}\,(\bpar_{APS})=\frac{1}{2}\int_\Sigma c_1(\Sigma,g) -\xi_B,\;\;\xi_B:=\frac{1}{2}\bigl(\,\dim B+\eta_{B}(0)\,\bigr).
\]
Above,  $c_1(\Sigma, g)\in \Omega^2(\Sigma)$ is the  $2$-form $\frac{1}{2\pi} K_gdV_g$, where $K_
g$ denotes the sectional curvature of $g$ and $dV_g$ denotes the metric volume form on $\Sigma$. From the Gauss-Bonnet theorem for manifolds with boundary \cite[\S 6.6]{O} we deduce
\[
\int_\Sigma c_1(\Sigma, g) + \frac{1}{2\pi} \int_{\pa \Sigma} h ds=\chi(\Sigma),
\]
where $h:\pa \Sigma \ra \bR$ is the mean curvature function defined as above.  We deduce
\begin{equation}
i_{APS}(\Sigma,g) =\frac{1}{2}\chi(\Sigma)-\frac{1}{4\pi}\int_\Sigma h ds-\xi_B.
\label{eq: ind}
\end{equation}
If $\pa \Sigma$ has several components $\pa \Sigma=\pa_1\Sigma\sqcup\cdots \sqcup \pa_n\Sigma$, then we have  $n$ scalars
\[
H_i=\frac{1}{4\pi}\int_{\pa_i\Sigma} h ds,
\]
and a direct sum decomposition $B=\oplus_{i=1}^n B_i$, where each of the operators $B_i$ is described by (\ref{eq: br}). We set
\[
\rho_i=H_i-\lfloor H_i\rfloor,\;\;i=1,\dotsc, n.
 \]
 Then   using (\ref{eq: br})  and (\ref{eq: xi}) we  deduce
 \[
 \xi_{B_i}=\frac{1}{2}\bigl(\, \dim\ker B_i+\eta_{B_i}(0)\,\bigr)=\frac{1}{2}\bigl(\,1-2\rho_i\,\bigr).
  \]
 We can rewrite (\ref{eq: ind}) as
 \begin{equation}
 i_{APS}(\Sigma,g) =\frac{1}{2}\chi(\Sigma)-\sum_{i=1}^n H_i-\frac{1}{2}\sum_{i=1}^n\bigl(\,1-2\rho_i\,\bigr)=\frac{1}{2}\bigl(\, \chi(\Sigma)-n\,\bigr)-\sum_{i=1}^n\lfloor H_i\rfloor.
 \label{eq: ind1}
 \end{equation}

\section{Dolbeault operators on  two-dimensional cobordisms}
\label{s: 3}
\setcounter{equation}{0}
  When thinking of cobordisms   we  adopt the Morse theoretic point of view.   For us  an elementary (nontrivial) $2$-dimensional cobordism    will be a pair  $(\Sigma, f)$ where $\Sigma$ is a compact,
  \emph{connected}, oriented   surface with boundary, $ f:\Sigma\ra \bR$ is a Morse function  with a unique critical point $p_0$ located in the interior of $\Sigma$ such that
  \[
  f(\Sigma)=[-1,1],\;\; f(\pa \Sigma)=\{-1,1\},\;\; f(p_0)=0.
  \]
  In more intuitive terms, an elementary cobordism  looks like one of the two pair of pants in Figure \ref{fig: cobdeg1}, where the Morse function is understood to be the altitude.
  
\begin{figure}[ht]
\centering{\includegraphics[height=1.1in,width=3.3in]{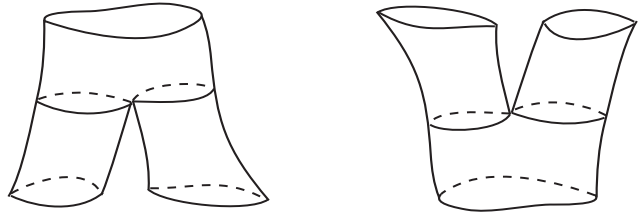}}
\caption{\sl Elementary   $2$-dimensional cobordisms.}
\label{fig: cobdeg1}
\end{figure}
We set
\[
\pa_\pm\Sigma :=f^{-1}(\pm 1).
\]
  In the sequel, for simplicity, we will assume that $\pa_+\Sigma$ is connected, i.e.,  the pair $(\Sigma, f)$ looks like the left-hand-side  of Figure \ref{fig: cobdeg1}.
  
  We fix a Riemann metric $g$  on $\Sigma$. For simplicity\footnote{The results to follow do not require the simplifying assumption (\ref{eq: can}) but  the computations would be less transparent.}  we assume   that in an open neighborhood $\eO$  near $p_0$ there exist local coordinates such that, in these coordinates we have
  \begin{equation}
  g=dx^2+dy^2,\;\;f(x,y)= - \alpha x^2 +\beta y^2,
 \label{eq: can}
 \end{equation}
  where $\alpha,\beta$ are positive   constants.  We  let $\nabla f$ denote the gradient of $f$ with  respect to this metric and we set
  \[
 C_t :=f^{-1}(t),\;\; t\neq 0.
  \]
  For $t\neq 0$  we regard $C_t$ cooriented by the gradient $\nabla f$.  Observe that $C_t$ has two connected components when $t<0$. We let $h_t:C_t\ra \bR$ be the mean curvature of this cooriented surface.  For $t\neq 0$  we set
  \[
  L_t=\int_{C_t} ds= {\rm length}\,(C_t),\;\;\omega_t:=\frac{1}{4\pi}\int_{C_t} h_t ds.
  \]
Observe that even the singular level set $C_0$ is equipped with a natural  measure defined by the arclength measure on $C_0\setminus \{0\}$. The  length of $C_0$ is finite since in a neighborhood of the singular point $p_0$ the level set isometric to a pair of intersecting  line segments in an  Euclidean space.

Denote by $W^\pm$ the stable/unstable manifolds of $p_0$ with respect to the flow $\Phi^t$ generated by $-\nabla f$. The  unstable manifold intersects the region $\{-1\leq f<0\}$ in two smooth paths (see Figure  \ref{fig: cobdeg2})
\[
[-1,0)\ni t\mapsto a_t,\;b_t\in C_t,\;\;\forall t\in [-1,0),
\]
while the stable manifold intersects the  region $\{0<f\leq 1\}$ in two smooth paths (the top red arcs in Figure \ref{fig: cobdeg2})
\[
(0,1]\ni t \mapsto a_t,\;b_t\in C_t,\;\;\forall t\in (0,1].
\]
Observe that $\lim_{t\ra 0}a_t=\lim_{t\ra 0} b_t=p_0$.  For this reason we set $a_0=b_0=p_0$.

\begin{figure}[h]
\centering{\includegraphics[height=2.3in,width=2.5in]{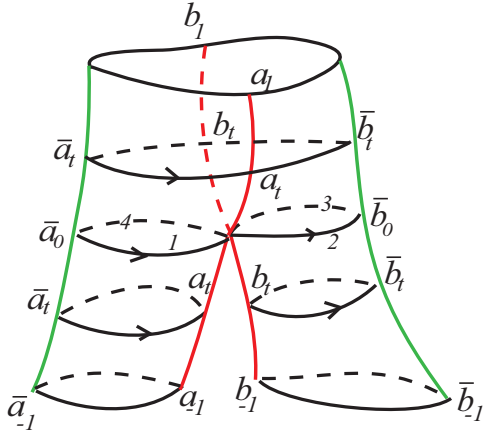}}
\caption{\sl Cutting an elementary   $2$-dimensional cobordism.}
\label{fig: cobdeg2}
\end{figure}

As we have mentioned before, for $t<0$ the  level set $C_t$ consists of two  curves. We denote by $C_t^a$ the component containing the point $a_t$ and by $C_t^b$ the component containing $b_t$.   For $t<0$ we set
\[
L_t^a:=\int_{C_t^a}ds,\;\; L_t^b:= \int_{C_t^b}ds,\;\;\omega_t^a:=\frac{1}{4\pi}\int_{C_t^a}h_t ds,\;\; \omega_t^b:=\frac{1}{4\pi}\int_{C_t^b}h_t ds
\]
so that
\[
L_t=L_t^a+L_t^b,\;\;\omega_t=\omega_t^a+\omega_t^b,\;\;\forall t<0.
\]
Fix a point $\bar{a}_{-1}\in C_{-1}^a\setminus\{a_{-1}\}$  and a point $\bar{b}_{-1}\in C^b_{-1}\setminus\{b_{-1}\}$.   For $t\in [-1,1]$  we denote by $\bar{a}_t$ (respectively $\bar{b}_t$)  the intersection of $C_t$ with the negative gradient  flow line through $\bar{a}_{-1}$ (respectively $\bar{b}_t$).  We obtain in this fashion two smooth maps (see  Figure \ref{fig: cobdeg2})
\[
\bar{a},\bar{b}:[-1,1]\ra \Sigma.
\]
For $t>0$ we denote by $I_t^a$ the component of $C_t\setminus \{a_t,b_t\}$ that contains the point $\bar{a}_t$ and by $I_t^b$ the component of $C_t\setminus \{a_t,b_t\}$ that contains the point $\bar{b}_t$.  

The regular part $C_0^*=C_0\setminus \{p_0\}$ consists of two components $C_0^a$ and $C_0^b$. We set
  \begin{equation}
 \frac{1}{4\pi}\omega_0^a:=\frac{1}{4\pi}\int_{C_0^a} h_0 ds,\;\;\omega_0^b:=\frac{1}{4\pi}\int_{C_0^b}h_0ds,\;\;\omega_0:=\frac{1}{4\pi}\int_{C_0^*}h_0ds=\omega_0^a+\omega_0^b.
  \label{eq: mc0}
  \end{equation}
  Note  that the limits $\lim_{t\ra 0} L_t^a$,   $ \lim_{t\ra 0} L_t^b$ exist and are finite. We denote them by $L_0^a$ and respectively $L_0^b$. We have
  \[
  L_0^a+L_0^b=L_0:={\rm length}\,(C)_0.
  \]
 Let $D_t$ denote the  restriction of $\bpar$ to the  cooriented  curve $C_t$, $t\neq 0$. As explained in the previous  section we have
 \[
D_t =\begin{cases}
-\ii \frac{d}{ds}+\frac{1}{2}h_t, & t>0,\\
 (-\ii \frac{d}{ds}+\frac{1}{2}h_t)|_{C_t^a}\oplus (-\ii \frac{d}{ds}+\frac{1}{2}h_t)|_{C_t^b}& t<0.
 \end{cases}
 \] 
 If we set 
\[
\rho_t=\omega_t-\lfloor \omega_t\rfloor,\;\; \rho^a_t=\omega_t-\lfloor \omega_t^a\rfloor,\;\;\rho^b_t=\omega_t-\lfloor \omega_t^b\rfloor,
\]
then the computations in Section \ref{s: 1} imply
\begin{equation}
\xi(t):=\xi_{D_t}=\frac{1}{2}\begin{cases}
1-2\rho_t, & t>0\\
(1-2\rho^a_t)+(1-2\rho^b_t), & t<0.
\end{cases}
\label{eq: xit}
\end{equation}

\medskip

\noindent \ding{43}  \emph{Throughout this and the next section we assume that both $D_{\pm 1} $ and are invertible.}

\medskip

We organize  the   family  of  complex  Hilbert spaces $L^2(C_t,ds;\bC)$, $t\in [-1,1]$ as a trivial bundle of Hilbert spaces as follows. 

First observe that  $C_0\setminus \{\bar{a}_0,\bar{b}_0,p_0\}$ is a disjoint union of four open arcs $I_1,\dotsc, I_4$ labeled as in Figure \ref{fig: cobdeg2}. Denote by $\ell_j$ the length of $I_j$ so that
\[
L_0=\ell_1+\cdots +\ell_4,\;\;L_0^a=\ell_1+\ell_4,\;\;L_0^b=\ell_2+\ell_3.
\]
For $t>0$ we can  isometrically identify the  oriented open arc $C_t\setminus \bar{a}_t$ with the   open interval $(0,L_t)$. We obtain in this fashion a canonical isomorphism   
\[
\eI^+_t:=L^2(C_t, ds;\bC)\ra L^2(0,L_t;\bC).
\]
 The   rescaling 
\[
(0,L_0)\ra (0,L_t), \;\; (0,L_0)\ni t\mapsto \frac{t}{\lambda_t},\;\;\lambda_t=\frac{L_0}{L_t}
\]
induces a Hilbert space isomorphism 
\[
\eR^+_t:  L^2(0,L_t;\bC)\ra L^2(0,L_0;\bC)=:\bsH_0.
\]
Note that we have a partition $\eP_+$ of $[0,L_0]$
\begin{equation}
0=t_0<t_1<t_2<t_3<t_4=L_0,\;\;t_j-t_{j-1}=\ell_j,\;\;\forall j=1,\dotsc, 4.
\label{eq: p+}
\end{equation}
This defines a Hilbert space isomorphism 
\[
\eU_+: L^2(0,L_0;\bC)\ra \bigoplus_{j=1}^4 L^2(t_{j-1},t_j;\bC)=\bigoplus_{j=1}^4 L^2(I_j,ds;\bC)=:\bsH_0.
\]
For  $t<0$ we have
\[
L^2(C_t,ds;\bC)=L^2(C_t^a,ds;\bC)\oplus  L^2(C_t^b,ds;\bC).
\]
By removing the points $\bar{a}_t$ and $\bar{b}_t$ we obtain Hilbert space isomorphisms 
\[
L^2(C_t^a,ds;\bC)\ra L^2(0,L_t^a;\bC),\;\;  L^2(C_t^b,ds;\bC)\ra L^2(0,L_t^b;\bC)
\]
that add up to a Hilbert space isomorphism
\[
\eI_t^-:L^2(C_t,ds;\bC)\ra L^2(0,L_t^a;\bC)\oplus L^2(0,L_t^b;\bC).
\]
By rescaling we obtain a Hilbert space isomorphism
\[
\eR^-_t: L^2(0,L_t^a;\bC)\oplus L^2(0,L_t^b;\bC)\ra L^2(0,L_0^a;\bC)\oplus L^2(0,L_0^b;\bC)\cong L^2(0, L_0;\bC).
\]
Next observe that we have   isomorphisms
\[
\eU_-^a:L^2(0,L_0^a;\bC)\ra L^2(I_1,ds;\bC)\oplus L^2(I_4,ds\bC),
\]
\[
\eU_-^b: L^2(0,L_0^b;\bC)\cong L^2(I_3,ds;\bC)\oplus L^2(I_3,ds;\bC),
\]
that add up to an isomorphisms
\[
\eU_-: L^2(0,L_0;\bC)\ra \bigoplus_{j=1}^4 L^2(I_j,ds;\bC).
\]
For $t=0$  we let $\eJ_0$ be the  natural isomorphism
\[
\eJ_0:L^2(C_0,ds;\bC)\ra \bigoplus_{j=1}^4 L^2(I_j,ds;\bC)\cong \bsH_0.
\]
Now define
\[
\eJ_t:=\begin{cases}
\eU_+\eR_t^+\eI_t^+, & t>0,\\
\eU_-\eR_t^-\eI_t^-, &t<0,\\
\eJ_0,& t=0.
\end{cases}
\]
We use the collection of isomorphisms $\eJ_t$ organizes the collection $L^2(C_t,ds;\bC)$ as a trivial Hilbert $\eH$ bundle over $[-1,1]$.

\begin{remark} Let us observe  that any continous function $f:\Sigma\ra \bC$ induces  elements $f|_{C_t}\in L^2(C_t,ds;\bC)$, $\forall t\in [-1,1]$ which   in turn define a continuous section  of the trivial Hilbert  bundle $\eH$.\qed
\end{remark}

\begin{theorem}   (a) The  operators $\eD_t:=\eJ_tD_t\eJ_t^{-1}$ converge in the gap topology as $t\ra 0^\pm$ to  Fredholm, selfadjoint operators  $\eD_0^\pm$.

\noindent (b) The  eta invariants of $\eD_0^\pm$ exist, and  we set
\[
\xi_\pm :=\frac{1}{2}\left(\dim\ker \eD_0^\pm+\eta_{\eD_0^\pm}(0)\,\right),
\]
If $\ker \eD_0^\pm =0$ then we have\footnote{The condition $\ker \eD_0^\pm=0$ is satisfied for  an open and dense set of metrics $g$ satisfying (\ref{eq: can}).  When this condition is  violated       the identity (\ref{eq: main}) needs to be  slightly modified  to take into account these kernels.}
\begin{equation}
i_{APS}(\bpar)+ \lim_{\ve\ra 0^+}SF\bigl(\,\eD_t; \ve<t \leq 1\,\bigr)+\lim_{\ve \ra 0^+}SF\bigl(\,\eD_t,\; -1\leq t < -\ve\,\bigr)=-(\xi_+-\xi_-).
\label{eq: main}
\end{equation}
\label{th: main}
\end{theorem}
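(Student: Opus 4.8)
The plan is to reduce everything to the model analysis of Section~\ref{s: 1} and the non-cylindrical index computation of Section~\ref{s: 2}. For part (a), the key observation is that the conjugated operators $\eD_t$ are, after the explicit rescaling built into the isomorphisms $\eJ_t$, exactly operators of the form $T_{A_t, c_t}$ studied in Remark~\ref{rem: resc}. Concretely, for $t>0$ the curve $C_t$ is a single circle and the restriction $D_t = -\ii\frac{d}{ds}+\frac{1}{2}h_t$ becomes, under $\eI_t^+$, the operator $D_{a_t}$ on $L^2(0,L_t;\bC)$ with $a_t = \frac{1}{2}h_t$; under the rescaling $\eR_t^+$ it becomes $c_t T_{c_t^{-1}A_t}$ with $c_t = L_0/L_t$ and $A_t(s) = \int_0^s a_t$. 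The whole point is that $A_t(L_t) = 2\omega_t\cdot 2\pi$ and, crucially, that as $t\searrow 0$ the measures $\tfrac12 h_t\,ds$ on $C_t$ converge \emph{very weakly} (in the sense of Definition~\ref{def: a}) to a limiting measure on $C_0$ that, in the coordinates near $p_0$ given by (\ref{eq: can}), picks up a Dirac mass at $p_0$ coming from the blow-up of the geodesic curvature of $C_t$ near the saddle. This is the analytic heart of the argument and I expect it to be the main obstacle: one must compute the asymptotics of $\int_{C_t} h_t\,ds$ restricted to a neighborhood of $p_0$ using the explicit model $f = -\alpha x^2+\beta y^2$, $g=dx^2+dy^2$, and show that the contribution localizes to a $\delta$ at $p_0$ with a definite mass (and likewise for the $t<0$ side, where $C_t$ has two components). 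Granting this, Corollary~\ref{cor: cont} and Remark~\ref{rem: resc} give gap-convergence of $\eD_t$ to an operator of the form $T_{A_\pm, c_\pm}$, hence to a selfadjoint Fredholm operator with compact resolvent; Theorem~\ref{th: split} then identifies $\eD_0^\pm$ with the boundary-value operators $L_{A_\pm,\eP_\pm}$ on the disjoint union of the four arcs $I_1,\dots,I_4$, with transmission/jump conditions dictated by the partition $\eP_\pm$ of (\ref{eq: p+}) and the jumps $c_j$ read off from the Dirac masses.

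**The index identity.** For part (b), existence of the eta invariants and the formula for $\xi_\pm$ follow from the explicit spectral description in Section~\ref{s: 1} applied to $\eD_0^\pm = L_{A_\pm,\eP_\pm}$: its spectrum is an arithmetic progression governed by $\rho_{A_\pm}$, so $\xi_\pm = \tfrac12 - \rho_{A_\pm}$ by (\ref{eq: xi}), where $\rho_{A_\pm}$ is the fractional part of the total limiting ``$\omega$''. To establish (\ref{eq: main}), I would combine three inputs. First, the spectral-flow computation (\ref{eq: sf-xi}): on each closed subinterval $[\ve,1]$ and $[-1,-\ve]$ the family $\eD_t$ has discrete spectrum moving according to $\omega_t$ (resp. $\omega_t^a+\omega_t^b$), so $SF(\eD_t;\ve\le t\le 1) = \lfloor\omega_1\rfloor-\lfloor\omega_\ve\rfloor$ and similarly on the negative side, and letting $\ve\to 0$ these converge to $\lfloor\omega_1\rfloor - \lfloor\omega_{0+}\rfloor$ and $\lfloor\omega_{-1}^a\rfloor+\lfloor\omega_{-1}^b\rfloor - \lfloor\omega_{0-}^a\rfloor - \lfloor\omega_{0-}^b\rfloor$ (finiteness is exactly the statement that $h_t$ stays integrable away from the degeneration, which is part of the Section~\ref{s: 1} setup). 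Second, the non-cylindrical APS formula (\ref{eq: ind1}) applied to $(\Sigma,g)$: here $\pa\Sigma = \pa_+\Sigma \sqcup \pa_-\Sigma$ with $\pa_-\Sigma$ having two components, so $n=3$, $\chi(\Sigma) = -1$, and $i_{APS}(\bpar) = \tfrac12(\chi(\Sigma)-3) - \lfloor H_+\rfloor - \lfloor H_-^a\rfloor - \lfloor H_-^b\rfloor$, where $H_\pm$ are the boundary mean-curvature integrals at $t=\pm 1$, i.e. $H_+ = \omega_1$ and $H_-^{a,b} = \omega_{-1}^{a,b}$ (modulo a sign from the coorientation convention, which must be checked against Section~\ref{s: 2}).

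**Assembling and the bookkeeping.** Adding the two spectral flows to $i_{APS}(\bpar)$, the integer-part terms at $t=\pm 1$ cancel against those in (\ref{eq: ind1}), leaving $-2 + \lfloor\omega_{0+}\rfloor + \lfloor\omega_{0-}^a\rfloor + \lfloor\omega_{0-}^b\rfloor$ — no wait, more carefully: the left side of (\ref{eq: main}) collapses to $-2 - \bigl(\lfloor\omega_{0+}\rfloor + \lfloor\omega_{0-}^a\rfloor + \lfloor\omega_{0-}^b\rfloor\bigr)$ plus a constant, and this must be matched to $-(\xi_+-\xi_-)$. By (\ref{eq: xi}) applied to each limiting operator, $\xi_+ = \tfrac12 - \rho_{0+}$ (a single circle's worth of limit) and $\xi_- = (\tfrac12-\rho_{0-}^a)+(\tfrac12-\rho_{0-}^b) = 1 - \rho_{0-}^a - \rho_{0-}^b$, so $-(\xi_+-\xi_-) = \tfrac12 - \rho_{0-}^a - \rho_{0-}^b + \rho_{0+}$; writing each $\rho = \omega - \lfloor\omega\rfloor$ and using that the limiting total curvature integrals satisfy $\omega_{0+} = \omega_{0-}^a + \omega_{0-}^b$ up to the Dirac-mass corrections at $p_0$ (this is where the continuity/conservation of $\int h\,ds$ across the singular level, including the localized contributions, must be invoked) makes the fractional parts and integer parts align and yields the identity. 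The genuinely delicate bookkeeping is keeping the coorientation signs and the split of the jump contributions consistent between the ``from above'' and ``from below'' limits; I would handle this by carrying the explicit model (\ref{eq: can}) through a single neighborhood computation once and for all. The hypothesis $\ker\eD_0^\pm = 0$, i.e. $\rho_{0\pm}\neq 0$, guarantees the APS operator $\bpar_{APS}$ and the endpoint operators are all invertible so that every spectral flow and index in sight is unambiguous.
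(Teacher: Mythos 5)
Your strategy matches the paper's: reduce to the model degenerations of Section~\ref{s: 1} via the rescaling built into $\eJ_t$, prove very weak convergence of the rescaled potentials so that Corollary~\ref{cor: cont} and Theorem~\ref{th: split} give the limiting boundary-value operators, and then combine the noncylindrical APS formula of Section~\ref{s: 2} with the spectral-flow identity~(\ref{eq: sf-xi}). The overall skeleton is right.

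There are, however, two concrete gaps. The first is the central geometric computation that you treat as a handwave (``up to the Dirac-mass corrections at $p_0$''). The limiting curvature integrals do \emph{not} satisfy a conservation law $\omega_0^+=\omega_0^{-,a}+\omega_0^{-,b}$; they jump by exactly $-\tfrac12$. The paper computes the angular variation of the unit tangent along $C_t\cap\eO$: the two arcs approaching from above contribute $-2\theta_+$ to $\int h_t\,ds$, giving $\omega_0^+=\omega_0-\tfrac{\theta_+}{2\pi}$, while the two arcs from below contribute $+2\theta_-$, giving $\omega_0^-=\omega_0+\tfrac{\theta_-}{2\pi}$. The supplementary-angle relation $\theta_++\theta_-=\pi$ at the saddle then yields $\omega_0^+-\omega_0^-=-\tfrac12$, which is precisely what cancels the topological term $\tfrac12\chi(\Sigma)=-\tfrac12$ in the index formula. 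Without pinning down this $-\tfrac12$ the bookkeeping cannot close; the sign and numerical value are not free parameters.

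The second gap is the coorientation sign at $\pa_-\Sigma$, which you flag but do not resolve, and which affects more than a constant. At $\pa_-\Sigma$ the outer normal is $-\nabla f$, so the boundary restriction is $B_-=-D_{-1}$ and $H_-^{a,b}=-\omega_{-1}^{a,b}$; with $\ker D_{-1}=0$ one has $\xi_{B_-}=-\xi_{D_{-1}}$, and $\lfloor -\omega\rfloor=-\lfloor\omega\rfloor-1$ for $\omega\notin\bZ$. Your floor-function formulation via~(\ref{eq: ind1}) therefore picks up extra integer corrections that you have not accounted for (and your tentative algebra in the last paragraph shows as much). The paper avoids this by working directly with~(\ref{eq: ind}) together with~(\ref{eq: sf-xi}) in its $\omega+\xi$ form, in which the $\pm 1$ boundary contributions cancel telescopically against the spectral flows and only $\omega_0^\pm$ and $\xi_\pm$ survive.
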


\begin{proof}  We set 
\[
\eS_t:= \begin{cases}
 \eU_{+}^{-1} \eD_t \eU_+,& t>0\\
\eU_{-}^{-1} \eD_t \eU_-,& t<0.
\end{cases}
\]
To establish the convergence  statements we  show   that  the  limits  $\lim_{t\ra 0^\pm}\eS_t$ exist in the  gap topology of  the space of  unbounded selfadjoint operators on $L^2(0,L_0;\bC)$. We discuss separately the cases $\pm t>0$, corresponding to restrictions to level sets above/below the critical level set $\{f=0\}$.

\medskip

\noindent {\bf A.} $t>0$.  We observe that
 \[
 \Dom(\eS_t)= \Bigl\{ u \in L^{1,2}(0,L_0;\bC);\;\;u(L_0)= u(0)\,\Bigl\},\;\;\eS_t(u)=-\ii \lambda_t\frac{d}{ds} + \frac{1}{2} h_t\bigl(\,s/\lambda_t\,\bigr),
 \]
 where  we recall that the constant $\lambda_t$ is  the rescaling factor $L_0/L_t$. We set
 \[
 A_t(s):=\frac{1}{\lambda_t}\int_0^s  h_t\bigl(\,\si/\lambda_t\,\bigr)d\si
 \]
 Using the fact that $\lambda_t\ra 1$ and Proposition \ref{prop: cont} we see that it suffices to show that $A_t$ is very weakly  convergent in  $\eA_{L_0}$; see Definition \ref{def: a}. Thus it suffices to prove two things.
 
\begin{equation*}
\mbox{The limit $\lim_{t\ra 0^+} A_t(L_0)$ exists.}
\tag{$\mathbf{A_1}$}
\label{tag: a1}
\end{equation*}
 
\begin{equation*}
\mbox{The limits $\lim_{t\ra 0^+}A_t(s)$ exists for almost any $s\in (0,L_0)$.}
\tag{$\mathbf{A_2}$}
\label{tag: a2}
\end{equation*}
  
\noindent {\bf Proof of} (\ref{tag: a1}).  Observe that 
\[
A_t(L_0)=\int_{C_t} h_t ds=\int_{C_t-\eO}h_tds+\int_{\eO\cap C_t} h_t ds, 
\]
where $\eO$ is the  neighborhood  where (\ref{eq: can}) holds. The intersection of  $C_t$ with $\eO$ is depicted in Figure \ref{fig: cobdeg3}.

\begin{figure}[h]
\centering{\includegraphics[height=3in,width=3in]{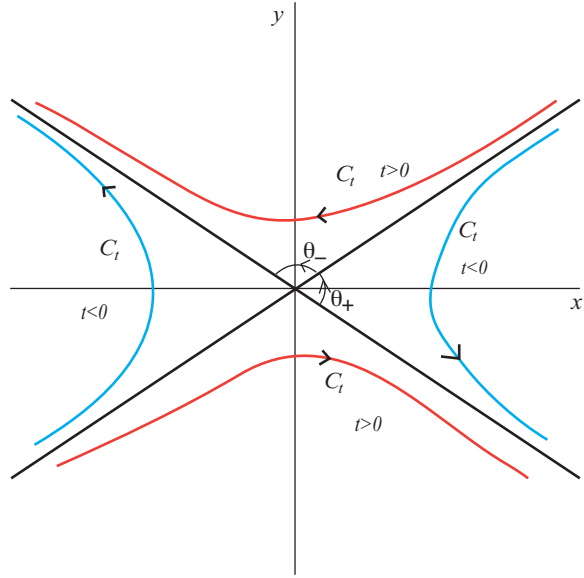}}
\caption{\sl The behavior of $C_t$ near the critical point.}
\label{fig: cobdeg3}
\end{figure}

The integral $\int_{C_t\setminus \eO}h_t ds$ converges as $t\ra 0^+$ to $\int_{C_0\setminus \eO} h_0 ds$.     Next observe that the intersection $C_t\cap \eO$ consists of two oriented arcs (see Figure \ref{fig: cobdeg3}) and  the integral   $\int_{\eO\cap C_t}h_t$ computes  the total angular variation of the   oriented unit  tangent vector field along these oriented arcs.   Using the notations in  Figure \ref{fig: cobdeg3} we see that this total variation approaches $-2\theta_+$ as $t\ra 0+$. Hence
\[
\lim_{t\ra 0^+}  A_t(L_0)=\int_{C_0\setminus 0}h_0 ds -2\theta_+,
\]
so that
 \begin{equation}
 \omega_0^+=\lim_{t\ra 0^+}\omega_t=\frac{1}{4\pi}\lim_{t\ra 0^+} \int_{C_t}h_t ds=\omega_0-\frac{\theta_+}{2\pi}.
 \label{eq: lrmc}
 \end{equation}

\noindent {\bf Proof of} (\ref{tag: a2}).   Let $C_t^*:=C_t\setminus\{\bar{a}_t\}$ and define  $s=s(q): C_t^*\ra (0,\infty)$  to  be the coordinate function  on $C_t^*$ such that the resulting map 
\[
C_t^*\ra \bR, \;\;q\mapsto \si(q) =s(q)/\lambda_t
\]
is an orientation preserving isometry onto $(0,L_t)$.   In other words $\si$ is the oriented arclength function  measured starting at $\bar{a}_t$, and $s$ defines a diffeomorphism $C_t^*\ra (0,L_0)$. Let $q_t: (0,L_0)\ra C_t^*$ be the inverse of this diffeomorphism.

Consider the partition (\ref{eq: p+}).  Observe that there exists  positive constants $c$ and $\ve$ such that whenever 
 \[
 \forall t\in (0,\ve),\;\;\forall s\in [t_1-c,t_1+c]\cup [t_3- c,t_3+c]:\;\; q_t(s)\in \eO
 \]
  the numbers  $t_j$ are defined by (\ref{eq: p+}).   Intuitively the intervals  $[t_1-c,t_1+c]\cup [t_3- c,t_3+c]$ collect the parts of $C_t$  that are close to the critical point $p_0$.   The length of each of the  two components  of $C_t$ that are close to $p_0$ is   bounded  from below  by $2c/\lambda_t$.
  
   To prove part (b) it suffices to understand the behavior of $A_t(s)$ for $s\in [t_1-c,t_1+c]\cup [t_3- c,t_3+c]$. We do this for one of the components since the behavior   for the other component is entirely similar.   We look at the component of $C_t\cap \eO$ that lies in the lower half-plane in Figure \ref{fig: cobdeg3}).  
      
   Here is a geometric approach.  As explained before the difference  $A_t(s)-A_t(t_1-c)$  computes the      angular variation of the unit tangent over the interval $[t_1-c,s]$.  A close look  at Figure \ref{fig: cobdeg3} shows that the absolute value of this is bounded  above by $\theta_+$.    This proves the boundedness part of the bounded convergence.  The almost everywhere convergence is  also obvious in view of the above   geometric interpretation.  The limit function is a bounded function $ A_0:[0,L_0]\ra \bR$ that has  jumps $-\theta_+$ at $t_1$ and  $t_3$
   \[
   A_0(t_1^+)-A_0(t_1^-)=A(t_3^+)-A(t_3^-)=-\theta_+,
   \]
   while the     continuous function
   \[
   A_0(t)+\theta_+H(t-t_1)+\theta_+H(t-t_3)
   \]
   is   differentiable   everywhere on $[0,L_0]\setminus \{t_1,t_3\}$ and the derivative  is the  mean curvature function $h_0$ of $C_0\setminus \{p_0\}$.

  We can now invoke  Theorem \ref{th: split} to conclude that   the  operators $\eD_t$ converge as  $t\ra 0^+$  to  the operator
  \[
  \eD_0^+:\Dom(\eD_0^+)\subset L^2(0,L_0;\bC)\ra  L^2(0,L_0;\bC),
  \]
  where $\Dom(\eD_0^+)$ consists of functions $ u\in L^2(0,L_0;\bC)$ such that
  \[
  u|_{(t_{j-1},t_j)}\in L^{1,2}(t_{j-1},t_j),\;\; \forall j=1,\dotsc, 4,
  \]
  \[
  u(t_i^+)=e^{\ii \theta_+/2}u(t_i^-),\;\;i=1,3,
  \]
 \[
  u(t_2^-)=u(t_2^+) ,\;\; u(t_4^-)= u(t_0^+),
  \]
  while for $u\in \Dom(\eD_0^+)$ we have
  \[
  \Bigl(\eD_0^+u\,\Bigr)|_{(t_{j-1},t_j)}=\Bigl(\,-\ii \frac{d}{ds}+\frac{1}{2}h_0(s)\,\Bigr)u|_{(t_{j-1},t_j)},\;\;\forall j=1,\dotsc, 4.
  \]
  Using the point of view elaborated in  Remark \ref{rem: bvp}   we  let $I$ denote the disjoint union of the intervals $I_j$, $j=1,\dotsc, 4$. We regard  $\eD_0^+$  as a closed densely defined operator on the Hilbert space $L^2(I,\bC)$ with domain consisting  of quadruples $\bu=(u_1,\dotsc ,u_4)\in L^{1,2}(I)$ satisfying the boundary condition
  \[
  \pa_-\bu = \bsT_+ \pa_+ \bu,
  \]
  where $\pa_\pm$ denotes the restriction to the outgoing/incoming boundary component of $I$, while  
  \[
  \bsT_+: \bC^4\cong L^2(\pa_+ I)\ra L^2(\pa_+I)\cong \bC^4,
  \]
  is the transmission operator given by the  unitary $4\times 4$ matrix
  \[
  \bsT_+=\left[
  \begin{array}{cccc}
  0 & 0 & 0 &1\\
  e^{\ii\theta_+/2}& 0 & 0 &0\\
  0 & 1 & 0& 0\\
  0 &0 &e^{\ii\theta_+/2} & 0
  \end{array}
  \right],\;\;\mbox{and} \;\; \eD_0^+\left[
  \begin{array}{c}
  u_1\\
  \vdots\\
  u_4
  \end{array}\right]= \left(\,-\ii\frac{d}{ds}+\frac{1}{2}h_0\,\right) \left[
  \begin{array}{c}
  u_1\\
  \vdots\\
  u_4
  \end{array}\right].
  \]
 Using (\ref{eq: xi}) we deduce that
 \begin{equation}
 \xi^+=\xi_{\eD_0^+}= \frac{1}{2}(1-2\rho_+),\;\; \rho_+=\omega_0^+-\lfloor \omega_0^+\rfloor=\omega_0-\frac{\theta_+}{2\pi}-\left\lfloor \omega_0-\frac{\theta_+}{2\pi}\right\rfloor.
 \label{eq: xi+}
 \end{equation}
 
 \medskip
 
 \noindent {\bf B.} $t<0$.   We observe that $\eS_t=\eS_t^a\oplus \eS_t^b$, where for $\bullet=a,b$ we have
 \[
 \eS_t^\bullet:\Dom(\eS_t^\bullet)\subset L^2(0,L_0^\bullet;\bC)\ra  L^2(0,L_0^\bullet;\bC),
 \]
 \[
 \Dom(\eS_t^\bullet)=\bigl\{ u\in L^{1,2}(0,L_0^\bullet;\bC);\;\; u(L_0^\bullet)=u(0)\,\bigr\},\;\;\eS_t^\bullet u=  -\ii \lambda_t^\bullet \frac{d}{ds} + \frac{1}{2} h_t\bigl(\,s/\lambda_t^\bullet\,\bigr),
 \]
 and $\lambda_t^\bullet$ is the rescaling factor $\frac{L_0^\bullet}{L_t^\bullet}$.  It is convenient to regard $\eS_t^\bullet$ as defined on the component $C_0^\bullet$ of $C_0^*$.  Observe that $C_0^a\setminus \{\bar{a}_0\}=I_1\cup I_4$ and $C_0^b\setminus\{\bar{b}_0\}= I_2\cup I_3$. Arguing as in the case   $t>0$ we conclude that 
 \begin{equation}
 \lim_{t\nearrow 0} \omega_t^a= \omega_0^a+\frac{\theta_-}{4\pi},\;\;  \lim_{t\nearrow 0} \omega_t^b= \omega_0^a+\frac{\theta_-}{4\pi},\;\;\omega_0^-:=\lim_{t\nearrow 0}\omega_t=\omega_0+\frac{\theta_-}{2\pi},
 \label{eq: lim-}
 \end{equation}
 and that the operators $\eD_t^a$ and $\eD_t^b$ converge  in the gap topology  as $t\ra 0^-$ to operators
 \[
 \eD_0^a:\Dom(\eD_0^a)\subset L^2(I_1)\oplus L^2(I_4)\ra  L^2(I_1)\oplus L^2(I_4),
 \]
 \[
 \eD_0^b:\Dom(\eD_0^b)\subset L^2(I_2)\oplus L^2(I_3)\ra  L^2(I_2)\oplus L^2(I_3),
 \]
 where $ \Dom(\eD_0^a)$ consists of functions $(u_1,u_4)\in L^{1,2}(I_1)\oplus L^{1,2}(I_4)$ such that
 \[
 u_4(\pa_-I_4)=e^{-\ii \theta_-/2}u_1(\pa_+I_1),\;\;u_4(\pa_+ I_4)=u_1(\pa_- I_1),
 \]
 $ \Dom(\eD_0^b)$ consists of functions $(u_2,u_3)\in L^{1,2}(I_3)\oplus L^{1,2}(I_3)$ such that
 \[
 u_2(\pa_-I_2)=e^{-\ii \theta_-/4\pi}u_3(\pa_+I_3),\;\;u_2(\pa_+ I_2)=u_1(\pa_- I_3),
 \]
 where $\theta_-$ is depicted in Figure \ref{fig: cobdeg3},  and
 \[
 \eD_0^a(u_1,u_4)=\bigl(\,-\ii\frac{du_1}{ds}+\frac{1}{2}h_0u_1, -\ii\frac{du_4}{ds}+\frac{1}{2}h_0u_4\,\bigr),
 \]
 \[
 \eD_0^a(u_2,u_3)=\bigl(\,-\ii\frac{du_2}{ds}+\frac{1}{2}h_0u_2, -\ii\frac{du_3}{ds}+\frac{1}{2}h_0u_3\,\bigr).
 \]
The direct sum $\eD_0^-=\eD_0^a\oplus \eD_0^b$   is the closed densely defined linear operator on $L^2(I)$ with domain  of quadruples $\bu=(u_1,\dotsc ,u_4)\in L^{1,2}(I,\bC)$ satisfying the boundary condition
  \[
  \pa_-\bu = \bsT_- \pa_+ \bu,
  \]
  where 
  \[
  \bsT_-:\bC^4\cong L^2(\pa_+ I)\ra L^2(\pa_+ I)\cong \bC^4,
  \]
  is the transmission operator given  by the  unitary $4\times 4$ matrix
  \[
  \bsT_-=\left[
  \begin{array}{cccc}
  0 & 0 & 0 & 1\\
  0& 0 & e^{-\ii\theta_-/2}& 0\\
  0 & 1 & 0& 0\\
  e^{-\ii\theta_-/2} & 0 & 0 & 0
  \end{array}
  \right],\;\;\mbox{and} \;\;\eD_0^-\left[
  \begin{array}{c}
  u_1\\
  \vdots\\
  u_4
  \end{array}\right]= \left(\,-\ii\frac{d}{ds}+\frac{1}{2}h_0\,\right) \left[
  \begin{array}{c}
  u_1\\
  \vdots\\
  u_4
 \end{array} \right].
  \]
  Then
  \[
  \xi_-=\xi_-^a+\xi_-^b,
  \]
  where for $\bullet=a,b$ we have
  \begin{equation}
  \xi_-^\bullet=\frac{1}{2}(1-2\rho_-^\bullet),\;\;\rho_-^\bullet=\omega_0^\bullet+\frac{\theta_-}{4\pi}-\left\lfloor \,\omega_0^\bullet+\frac{\theta_-}{4\pi}\,\right\rfloor.
  \label{eq: xi-}
  \end{equation}
  Combining (\ref{eq: lrmc}) and (\ref{eq: lim-})  with the equality $\theta_++\theta_-=\pi$ we deduce
\begin{equation}
\omega_0^+-\omega_0^-=\lim_{t\searrow 0} \omega_t -\lim_{t\nearrow 0} \omega_{t}=-\frac{1}{2}.
\label{eq: limit}
\end{equation}
To prove  (\ref{eq: main}) we use  the index formula (\ref{eq: ind}). We have 
  \[
  i_{APS}(\bpar) =-\frac{1}{2} -\omega_1+\omega_{-1}  -\xi_{D_1}+\xi_{D_{-1}}.
  \]
  \[
  \stackrel{(\ref{eq: limit})}{=} \omega_0^+-\omega_0^- -\omega_1+\omega_{-1}  -\xi_{D_1}+\xi_{D_{-1}}
  \]
  \[
  =(\omega_0^+ +\xi^+)-(\omega_1+\xi_{D_1}) -(\omega_0^-+\xi^-)+ (\omega_{-1}+\xi_{D_{-1}})-(\xi^+-\xi^-)
  \] 
  \[
  \stackrel{(\ref{eq: sf-xi})}{=} - \lim_{\ve\ra 0^+} SF\bigl(\, \eD_t; \ve <t \leq 1\,\bigr)- \lim_{\ve\ra 0^+}SF\bigl(\,\eD_t,\; -1\leq t < -\ve\,\bigr)-(\xi_+-\xi_-).
  \]

\end{proof} 

\begin{remark}[Twisted Dolbeault operators]  (a)   Here the outline of an   analytic argument proving (\ref{tag: a2}). Using (\ref{eq: can}) we deduce  that this component  has a parametrization compatible with the orientation given by
  \begin{equation}
  y_t=-\left(\zeta_t+mx^2\right)^{1/2},\;\; |x|< d_t
  \label{eq: param}
  \end{equation}
  where $\zeta_t=\frac{t}{\beta}$, $m=\frac{\alpha}{\beta}$ and $d_t$   is such that the length of this arc  is $2c/\lambda_t$.  Observe that there exists $d_*>0$ such that $\lim_{t\ra 0^+}dt=d_*$.  We have
  \[
  dy_t = -mx\left(\zeta_t+mx^2\right)^{-1/2}dx.
  \]
 Set 
 \[
 y_t':=\frac{dy_t}{dx}=-mx\left(\zeta_t+mx^2\right)^{-1/2},
 \]
 \[
 y_t'':=\frac{d^2y_t}{dx^2}=-m\left(\zeta_t+mx^2\right)^{-1/2}+m^2x^2\left(\zeta_t+mx^2\right)^{-3/2}=-\frac{m\zeta_t}{(\zeta_t+mx^2)^{3/2}}.
 \]
 The arclength is
  \[
  d\si^2= \Bigr(1+(y_t')^2\,\Bigl)dx^2=\underbrace{\left(\,1+ \frac{m^2 x^2}{\zeta_t+m x^2}\,\right)}_{=:w(t,x)^2} dx^2. 
  \]
  The   mean curvature  $h_t$ is  found using the Frenet formul{\ae}. More precisely $h_t(x)=\frac{y_t''}{w^3}$. Then
 \[
 h_td\si=h_t wdx=\frac{y_t''}{1+(y_t')^2} dx=-\frac{m\zeta_tdx}{(\zeta_t+mx^2)^{1/2}(\zeta_t+mx^2+m^2x^2)}.
 \]
 We observe now that  we can write $h_t d\si=\phi_t^* (\, \rho_\infty du\, )$, where $\phi_t$ is the rescaling map
 \[
 x\mapsto u=t^{-1/2} x\;\;\mbox{and}\;\; \rho_\infty(u)= -\frac{m\zeta_1}{(\zeta_1+mu^2)^{1/2}(\zeta_1+mu^2+m^2u^2)}.
 \]
 This then allows  us to conclude via a standard argument that  the   densities $h_td\si$ converge very  weakly as $t\ra 0^+$  to a  $\delta$-measure concentrated at the origin.

(b) The  results  in Theorem \ref{th: main} extend without  difficulty to  Dolbeault operators twisted by line bundles.  More precisely,  given a Hermitian   line bundle $L$ and a hermitian connection $A$ on $L$, we can form a Dolbeault operator $\bpar_A: C^\infty(L)\ra C^\infty(L\otimes K^{-1}_\Sigma)$.   Fortunately, all the    line bundles on a   the two-dimensional cobordism   $\Sigma$ are trivializable. We  fix a trivialization so that the connection $A$ can be  identified with a purely imaginary $1$-form 
\[
A= \ii a,\;\;a\in \Omega^1(\Sigma).
\]
Then
\[
\bpar_A=\bpar+\ii a^{0,1}.
\]
The restriction of $D^+_A=\sqrt{2}\bpar_A$ to the cooriented curve $C_t$ is
\[
D_A(t)= -\ii\nabla^A_s+\frac{1}{2}h_t =-\ii\frac{d}{ds}+\frac{1}{2}h_t + a_t,\;\; a_t:=a\Bigl(\, \frac{d}{ds}\,\Bigr)\in \Omega^0(C_t).
\]
  
  As  in the proof of Theorem \ref{th: main}, we only need to understand the behavior of $a_t$ in the neighborhood $\eO\cap C_t$.  Suppose for simplicity $t>0$ and we concentrate only on the component of $C_t\cap \eO$ that lies in the lower half-plane of Figure \ref{fig: cobdeg3}.  In the neighborhood $\eO$ we can write
  \[
  a= pdx+qdy,\;\;p,q\in C^\infty(\eO).
  \]
  Using the parametrization (\ref{eq: param}) we deduce that
  \[
  a|_{C_t\cap \eO}= \Bigl(\,p-mqx(\zeta_t+mx^2)^{-1/2}\,\Bigr) dx= a_t ds=a_twdx
    \]
  Hence, as $t\ra 0^+$, the measure $a_tds$ converges  to the measure $\bigl(\, p- m^{1/2}(2H(x)-1\,)\,\bigr)dx$

  (c) One may ask  what happens in the case of  a cobordism corresponding to a local min/max  of a Morse function. In this case $\Sigma$ is a disk,  the regular level sets  $C_t$ are circles and the singular level set is a point. Consider for example the case of a local minimum. Assume that the metric  near the minimum $p_0$ is Euclidean, and  in some Euclidean coordinates  near $p_0$ we have $f=x^2+y^2$. Then $C_t$ is the  Euclidean circle of radius $t^{1/2}$, and the function $h_t$ is the constant function $h_t=t^{-1/2}$. Then $\omega_t=\frac{1}{2}$, $\xi_t=\frac{1}{2}$ and the   Atiyah-Patodi-Singer index of $\bpar$ on the Euclidean disk of radius $t^{1/2}$ is $0$.   The operator  $D_t$ can be identified with the operator
  \[
  -\ii\frac{d}{ds}+\frac{1}{2t^{1/2}}
  \]
  with periodic boundary conditions on the interval $[0,2\pi t^{1/2}]$. Using the rescaling trick in Remark \ref{rem: resc} we see that this operator is conjugate to the operator $L_t=-t^{1/2}\ii \frac{d}{ds}+\frac{1}{2}$ on the interval  $[0,2\pi]$ with periodic boundary conditions.   The   switched graphs of these operators 
  \[
  \widetilde{\Gamma}_{L_t}=\bigl\{ (L_t u, u);\;\;u\in L^{1,2}(0,2\pi;\,\bC);\;\;u(0)=u(2\pi)\,\bigr\}\subset \bsH\oplus \bsH,\;\;\bsH=L^2(0,2\pi;\,\bC),
  \]
  converge in the gap topology to the subspace $\bsH_+=\bsH\oplus 0\subset \bsH\oplus \bsH$.  This limit is not the switched graph of any    operator. However, this limiting space     forms a Fredholm pair with $\bsH_-=0\oplus \bsH$  and invoking  the results in \cite{Cib}  we conclude that the limit
  \[
  \lim_{\ve \searrow 0} SF( L_t;\;\;\ve \leq t \leq t_0)
  \]
  exists an it is finite.\qed    \end{remark}


\section{The  Kashiwara-Wall index}
\label{s: 4}
In this final section we would like to identify    the correction term    in the right hand side of (\ref{eq: main}) with  a symplectic   invariant that often appears in surgery formul{\ae}.  To this aim, we need to elaborate  on the symplectic point of view  first outlined in Remark \ref{rem: bvp}.

Fix a finite dimensional complex hermitian space  $\bsE$,  let $n:=\dim \bsE$, and set
\[
\widehat{\bsE}:=\bsE\oplus\bsE,\;\;\bsE_+:=\bsE\oplus 0,\;\;\bsE_-:=0\oplus \bsE,
\]
and let $J:\widehat{\bsE}\ra \widehat{\bsE}$  be the unitary operator given by the block decomposition
\[
J=\left[\begin{array}{cc}
-\ii & 0\\
0 &\ii
\end{array}
\right].
\]
We let   $\Lag$ denote the  space of hermitian lagrangians on $\wbe$,  i.e., complex subspaces $L\subset \wbe$ such that $L^\perp=JL$. As explained in \cite{Cib, N1}  any such a lagragian can be identified with the graph\footnote{In \cite{KL} a lagrangian is identified with the graph of  an isometry $\bsE_-\ra \bsE_+$ which explains why our formul{\ae}  will look a bit different than the ones on \cite{KL}. Our  choice is based on the conventions   in \cite{Cib}  which seem to minimize  the number of signs in the Schubert calculus on $\Lag$.} of a complex isometry $T: \bsE_+\ra\bsE_-$, or equivalently, with the    group $U(\bsE)$ of unitary operators on $E$.        In other words, the graph map
\[
\Gamma: U(\bsE) \ra \Lag(\wbe),\;\; U(\bsE)\mapsto \Gamma_T\subset \widehat{\bsE}
\]
is a diffeomorphism. The involution $ L\leftrightarrow JL$ on $\Lag$ corresponds  via this diffeomorphism  to the involution $T\leftrightarrow -T$ on $U(\bsE)$.

We define  a branch of the logarithm  $\log :\bC^*\ra \bC$   by   requiring $\im \log\in [-\pi, \pi)$. Equivalently,
\[
\log z =\int_{\gamma_z} \frac{d\zeta}{\zeta},
\]
where $\gamma_z :[0,1]\ra \bC$ is any smooth path from   $1$ to $z$ such that
\[
\forall t\in[0,1),\;\; \gamma_z(t)\not\in (-\infty, 0].
\]
 In particular, $\log (-1)= \pi\ii$. Following \cite[\S 6]{KL} we define
\[
\tau: U(\bsE)\times U(\bsE)\ra \bR,\;\; \tau(T_0,T_1)=\frac{1}{2\pi\ii}\tr \log(T_1^{-1} T_0)
\]
\[
=\frac{1}{2\pi\ii}\sum_{\lambda \in\bC^*} \bigl(\,\log \lambda\,\bigr)m_\lambda,\;\; m_\lambda:=\dim \ker(\lambda-T_1^{-1}T_0).
\]
 Observe that
  \begin{equation}
   e^{2\pi\ii \tau(T_0,T_1)}=\frac{\det T_0}{\det T_1}
\label{eq: exp-tau}
\end{equation}
Note that
\begin{equation}
\tau(T_0,T_1)+\tau(T_1,T_0)=\dim\ker(T_0+T_1).
\label{eq: flip}
\end{equation}
Via the graph diffeomorphism we obtain a map
\[
\mu=\tau\circ \Gamma :\Lag \times \Lag \ra \bR.
\]
The equality (\ref{eq: flip}) can be rewritten as
\begin{equation}
\tau(L_0,L_1)+\tau(L_1,L_0)=\dim (L_0\cap J L_1)=\dim (JL_0\cap L_1).
\label{eq: flip1}
\end{equation}
We  want to relate the invariant $\tau$ to the eta invariant of a natural    selfadjoint operator. We associate to each pair $L_0,L_1\in \Lag$ the   selfadjoint operator
\[
D_{L_0,L_1}:V(L_0,L_1)\subset L^2(I,\wbe)\ra L^2(I,\wbe),
\]
where
\[
V(L_0,L_1)=\bigl\{ u\in L^{1,2}(I,\wbe);\;\;u(0)\in L_0,\;\;u(1)\in L_1\,\bigr\},\;\;D_{L_0,L_1}u =J\frac{du}{dt}.
\]
This is a   selfadjoint    operator with compact resolvent. We want to describe its spectrum, and in particular, prove that it has a well defined    eta invariant. Let $T_0, T_1: \bsE_+\ra \bsE_-$ denote the isometries   associated to $L_0$ and respectively $T_1$.  Then $T_1^{-1}T_0$  is a unitary operator on $\bsE_+$ so its spectrum consists of complex numbers of norm $1$.

\begin{proposition}   For any $L_0,L_1\in \Lag$ we have
\begin{equation}
\spec D_{L_0, L_1}= \frac{1}{2\ii}\exp^{-1}\Bigl(\, \spec(T_1^{-1}T_0)\,\Bigr).
\label{eq: spec1}
\end{equation}
In particular, the spectrum  of $D_{L_0,L_1}$ consists of finitely many  arithmetic progressions with ratio $\pi$ so that  the eta invariant of $D_{L_0,L_1}$ is well defined.
\label{prop: spec-h}
\end{proposition}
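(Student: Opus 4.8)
The plan is to solve the eigenvalue equation $D_{L_0,L_1}u=\lambda u$ by hand. Since $D_{L_0,L_1}$ has compact resolvent, its spectrum is a discrete set of eigenvalues of finite multiplicity, so it suffices to determine for which $\lambda\in\bC$ there is a nonzero $u\in V(L_0,L_1)$ with $J\frac{du}{dt}=\lambda u$. Multiplying by $-J$ and using $J^2=-1$, this equation becomes $\frac{du}{dt}=-\lambda J u$, whose solutions are $u(t)=e^{-\lambda t J}u(0)$; in particular every solution is smooth, hence automatically lies in $L^{1,2}(I,\wbe)$, and the only remaining constraints are the boundary conditions $u(0)\in L_0$ and $u(1)\in L_1$.

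Next I would diagonalize $J=\diag(-\ii,\ii)$ to obtain $e^{-\lambda t J}=\diag\bigl(e^{\ii\lambda t},e^{-\ii\lambda t}\bigr)$ acting blockwise on $\wbe=\bsE\oplus\bsE$. Let $T_0,T_1\in\Iso(\bsE_+,\bsE_-)$ be the isometries attached to $L_0,L_1$ as in Remark \ref{rem: bvp}, so that $L_i=\Gamma_{T_i}=\{\,\xi\oplus T_i\xi:\xi\in\bsE\,\}$. Writing $u(0)=\xi\oplus T_0\xi$ for some $\xi\in\bsE$, we get $u(1)=e^{\ii\lambda}\xi\oplus e^{-\ii\lambda}T_0\xi$, and the requirement $u(1)\in\Gamma_{T_1}$ reads $e^{-\ii\lambda}T_0\xi=T_1\bigl(e^{\ii\lambda}\xi\bigr)$, i.e. $T_1^{-1}T_0\,\xi=e^{2\ii\lambda}\xi$. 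Hence $u$ is a nonzero eigenfunction of $D_{L_0,L_1}$ with eigenvalue $\lambda$ if and only if $\xi\neq 0$ and $e^{2\ii\lambda}\in\spec(T_1^{-1}T_0)$; moreover the linear map $\xi\mapsto u$ is an isomorphism from $\ker\bigl(e^{2\ii\lambda}-T_1^{-1}T_0\bigr)$ onto $\ker(\lambda-D_{L_0,L_1})$, so the multiplicities coincide. Since $\exp(2\ii\lambda)=e^{2\ii\lambda}$, this is exactly the identity $\spec D_{L_0,L_1}=\frac{1}{2\ii}\exp^{-1}\bigl(\spec(T_1^{-1}T_0)\bigr)$; note that $e^{2\ii\lambda}\in\spec(T_1^{-1}T_0)\subset S^1$ forces $\lambda\in\bR$, consistent with the selfadjointness of $D_{L_0,L_1}$.

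For the final clause, $T_1^{-1}T_0\in U(\bsE)$ has $n$ eigenvalues, with multiplicity, of the form $e^{\ii\theta_1},\dots,e^{\ii\theta_n}$ on the unit circle, and the computation above gives $\spec D_{L_0,L_1}=\bigcup_{j=1}^n\{\,\tfrac{\theta_j}{2}+k\pi:k\in\bZ\,\}$, a finite union of arithmetic progressions of ratio $\pi$, each point carrying the multiplicity of the corresponding $e^{\ii\theta_j}$. Exactly as in the eta-invariant computation of Section \ref{s: 1}, the associated eta function is then a finite $\bZ$-linear combination of shifted Hurwitz zeta functions $\zeta(s,a)$, each of which continues meromorphically to $\bC$ with no pole at $s=0$; hence $\eta_{D_{L_0,L_1}}(s)$ is holomorphic near $s=0$ and the eta invariant is well defined. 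The only delicate point is bookkeeping of conventions — the direction of the isometries $T_i$, the sign in $J=\diag(-\ii,\ii)$, the normalization of the graph map, and of $\exp^{-1}$ — rather than any genuine analytic obstacle.
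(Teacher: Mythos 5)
Your proof is correct and follows essentially the same route as the paper: you solve the ODE $J\,du/dt=\lambda u$ explicitly, impose the boundary conditions $u(0)\in\Gamma_{T_0}$, $u(1)\in\Gamma_{T_1}$, and arrive at the condition $e^{2\ii\lambda}\in\spec(T_1^{-1}T_0)$, exactly as the paper does after splitting $u=(u_+,u_-)$. The only cosmetic difference is that you package the two scalar equations into the matrix exponential $e^{-\lambda tJ}$ rather than writing them componentwise, and you are more explicit than the paper about multiplicities, reality of $\lambda$, and the Hurwitz-zeta continuation at $s=0$.
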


\begin{proof} Observe first  that any $u\in L^2(I,\wbe)$ decomposes as a pair
\[
u=(u_+, u_-),\;\;u_\pm\in L^2(I,\bsE_\pm).
\]
If $u\in V(L_0, L_1)$ is an eigenvector of $D_{L_0,L_1}$ corresponding to an eigenvalue $\lambda$ then  $u$ satisfies  the boundary value problems
\begin{subequations}
\begin{equation}
-\ii\frac{du_+}{dt}=\lambda u_+,\;\;\ii\frac{du}{dt}=\lambda u_-,
\label{eq: eval1}
\end{equation}
\begin{equation}
u_-(0)=T_0u_+(0),\;\;u_-(1)= T_1u_+(1).
\label{eq: eval2}
\end{equation}
\end{subequations}
The equalities (\ref{eq: eval1}) imply that
\[
u_+(1)=e^{\ii \lambda}u_+(0),\;\;u_-(1)=e^{-\ii\lambda}u_-(0).
\]
Using (\ref{eq: eval2}) we deduce
\[
e^{\ii\lambda} T_1 u_+(0)=u_-(1)=e^{-\ii\lambda} u_-(0)= e^{-\ii\lambda} T_0 u_+(0).
\]
Hence
\[
e^{2\ii\lambda} \in \spec (T_1^{-1}T_0)\Longrightarrow   \lambda\in\frac{1}{2\ii}\exp^{-1}\Bigl(\, \spec(T_1^{-1}T_0)\,\Bigr).
\]
Running the above argument  in reverse we deduce that any $\lambda\in \frac{1}{2\ii}\exp^{-1}\Bigl(\, \spec(T_1^{-1}T_0)\,\Bigr)$ is an eigenvalue of $D_{L_0,L_1}$.
\end{proof}

We let $\xi(L_0,L_1)$ denote the reduced  eta invariant of $D_{L_0,L_1}$,
  \[
  \xi(L_0,L_1)=\frac{1}{2}\Bigl(\,\dim\ker D_{L_0,L_1}+ \eta_{D_{L_0,L_1}}(0)\,\Bigr).
  \]
  If $ e^{\ii \theta_1},\dotsc, e^{\ii\theta_n}$, $\theta_1,\dotsc, \theta_n\in (0,2\pi]$,  are the   eigenvalues of $T_1^{-1}T_0$,   then the spectrum of   $D_{L_0,L_1}$ is
\[
\spec\bigl(\, D(L_0,L_1)\,\bigr)=\bigcup_{k=1}^n \Bigl\{ \frac{\theta_k}{2}+\pi\bZ\Bigr\}.
\]
and we deduce as in Section \ref{s: 1} using  (\ref{eq: herm})  that
\[
\eta_{D_{L_0,L_1}}=\sum_{\theta_k\in(0,2\pi)} \Bigl( 1- \frac{\theta_k}{\pi} \,\Bigr),
\]
and
\[
\xi(L_0,L_1)=\frac{1}{2}\sum_{\theta_k\in(0,2\pi)} \Bigl( 1- \frac{\theta_k}{\pi} \,\Bigr)+\frac{1}{2}\dim \ker D_{L_0,L_1}.
\]
On the other hand
\[
\frac{1}{2\pi\ii}\tr\log(- T_1^{-1}T_0)= \frac{1}{2\pi}\sum_{\theta_k\in(0,2\pi]}(\theta_k-\pi)=-\frac{1}{2} \sum_{\theta_k\in(0,2\pi)} \Bigl( 1- \frac{\theta_k}{\pi} \,\Bigr)+\frac{1}{2}\dim \ker(T_0-T_1).
\]
Since $\ker D_{L_0,L_1}\cong \ker (T_0-T_1)\cong L_0\cap L_1$ we conclude
\[
\tau(T_0,-T_1)=\tau(-T_0,T_1)=\tau(JL_0, L_1)= -\xi(L_0,L_1)+ \dim (L_0\cap L_1).
\]
Using (\ref{eq: flip}) we deduce
\begin{equation}
\xi(L_0,L_1)=\tau(T_1,-T_0)=\tau(L_1, JL_0)=\tau(JL_1,L_0).
\label{eq: xi-tau}
\end{equation}

Following \cite{KL}  (see also \cite{CLM}) we associate to each triplet of lagrangians $L_0,L_1,L_2$ the quantity
\[
\omega(L_0,L_1,L_2):=\tau(L_1,L_0)+\tau(L_2, L_1)+\tau(L_0, L_2),
\]
and we will refer to its as the  (\emph{hermitian}) \emph{Kashiwara-Wall index}  (or simply the \emph{index}) of the triplet. Observe that $\omega$ is indeed an integer since (\ref{eq: exp-tau}) implies that
\[
e^{2\pi\ii \omega(L_0,L_1,L_2)}= 1.
\]
We set
\[
d(L_0,L_1,L_2):=\dim (JL_0\cap L_1)+\dim(JL_1\cap L_1)+\dim(JL_2\cap L_0).
\]
Using (\ref{eq: flip1}) we deduce that for any permutation $\vfi$ of $\{0,1,2\}$ with signature $\eps(\vfi)\in\{\pm 1\}$ we have
\begin{equation}
\omega(L_0,L_1,L_2)-\eps(\vfi)\omega(L_{\vfi(0)},L_{\vfi(1)}, L_{\vfi(2)})=d(L_0,L_1,L_2)\times \begin{cases}
0, &\mbox{$\vfi$ even}\\
1, & \mbox{$\vfi$ odd}.
\end{cases}
\label{eq: skew1}
\end{equation}
We want to apply the above facts to a special choice of $\wbe$.      Let $\bsI$ denote the disjoint union  of the intervals $I_1,\dotsc, I_4$ introduced in Section \ref{s: 3}.  They were obtained  by removing the points $\bar{a}_0$, $p_0$ and $\bar{b}_0$ from the critical level set $C_0$;   Figure \ref{fig: cobdeg2}.  We interpret $\bsI$ as an oriented $1$-dimensional with boundary and we let
\[
\wbe:= L^2(\pa \bsI),\;\; \bsE_\pm =L^2(\pa_\pm \bsI).
\]
The    spaces $\bsE_\pm$  have canonical bases and thus we can identify both of them with the standard  Hermitian space  $\bsE=\bC^4$. Define $J:\wbe\ra \wbe$ as before. We have  a canonical differential operator
\[
D_0: C^\infty(\bsI,\bC)\ra C^\infty(\bsI,\bC),\;\;D_0 \left[
\begin{array}{c}
u_1\\
\vdots\\
u_4
\end{array}
\right]=\left[
\begin{array}{c}
-\ii\frac{du_1}{dt}+\frac{1}{2}h_0|_{I_1}\\
\vdots\\
\vdots\\
-\ii\frac{du_1}{dt}+\frac{1}{2}h_0|_{I_4}
\end{array}
\right],
\]
We set
\[
\omega_k:=\frac{1}{4\pi}\int_{I_k}h_0 ds
\]
so that
\[
\omega_0=\omega_1+\cdots+\omega_4,\;\;\omega_0^a=\omega_1+\omega_4,\;\;\omega_0^b=\omega_2+\omega_3.
\]
We have a natural restriction map
\[
r:C^\infty(\bsI,\bC)\ra L^2(\pa \bsI,\bC) =\wbe
\]
 and we define the \emph{Cauchy data space}   of $D_0$ to be the subspace
 \[
 \Lambda_0:= r(\ker D_0)\subset \wbe.
 \]
 We can verify easily that  $\Lambda_0$ is a  Lagrangian subspace of  $\wbe$ that is described by the isometry $\bsT_0:\bsE_+\ra \bsE_-$ given by the diagonal matrix
 \[
 \bsT_0=\diag\bigl(\,e^{2\pi\ii\omega_1},\dotsc, e^{2\pi\ii\omega_4}\,\bigr).
 \]

\medskip

\noindent \ding{43}  \emph{In the remainder of this section we assume\footnote{This assumption is  satisfied for a generic choice of metric on $\Sigma$.}     that the operators $\eD_0^\pm$ that appear in Theorem \ref{th: main} are invertible.}

\begin{proposition}  Let $\eD_0^\pm$ be the operators that appear in Theorem \ref{th: main}. Then
 \begin{equation}
 \xi_{\eD_0^\pm}= -\xi\bigl(\,\Gamma_{\bsT_\pm},\Lambda_0\,\bigr)=\xi\bigl(\,\Lambda_0,\Gamma_{\bsT_\pm}\,\bigr)=\tau(\Gamma_{\bsT_\pm}, J\Lambda_0)
 \label{eq: eta-kashi}
 \end{equation}
 \label{prop: eta-kashi}
 \end{proposition}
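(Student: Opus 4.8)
The plan is to reduce all three quantities in (\ref{eq: eta-kashi}) to explicit expressions in the fractional parts $\rho_{\pm}$, $\rho_-^a$, $\rho_-^b$ already computed in the proof of Theorem \ref{th: main}, see (\ref{eq: xi+})--(\ref{eq: xi-}), and to check that they agree. The two rightmost equalities are formal consequences of the abstract identities of Section \ref{s: 4}; the middle equality $\xi_{\eD_0^{\pm}}=\xi(\Lambda_0,\Gamma_{\bsT_\pm})$ is the substantive one and is obtained from a spectral analysis of the model operators $D_{\Lambda_0,\Gamma_{\bsT_\pm}}$ through Proposition \ref{prop: spec-h}.

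First I would dispose of the formal steps. By (\ref{eq: xi-tau}) with $L_0=\Lambda_0$, $L_1=\Gamma_{\bsT_\pm}$ one has $\xi(\Lambda_0,\Gamma_{\bsT_\pm})=\tau(\Gamma_{\bsT_\pm},J\Lambda_0)$, which is the third equality. For the first equality, the change of variable $t\mapsto 1-t$ on $[0,1]$ carries an eigenvector of $D_{L_0,L_1}$ with eigenvalue $\lambda$ to an eigenvector of $D_{L_1,L_0}$ with eigenvalue $-\lambda$, so $\spec D_{L_1,L_0}=-\spec D_{L_0,L_1}$; hence $\eta_{D_{L_1,L_0}}(0)=-\eta_{D_{L_0,L_1}}(0)$ while the kernels have equal dimension, which gives $\xi(L_0,L_1)+\xi(L_1,L_0)=\dim\ker D_{L_0,L_1}=\dim(L_0\cap L_1)$ (one may also read this off from (\ref{eq: flip}) and (\ref{eq: xi-tau})). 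Under the standing invertibility hypothesis $\eD_0^{\pm}$ has trivial kernel; the computation below shows that $\ker\eD_0^{\pm}$ is nontrivial exactly when $\rho_\pm$ (resp.\ $\rho_-^a$, $\rho_-^b$) vanishes, equivalently when $1\in\spec(\bsT_\pm^{-1}\bsT_0)$, i.e.\ when $\Lambda_0\cap\Gamma_{\bsT_\pm}\ne 0$. Hence $\Lambda_0\cap\Gamma_{\bsT_\pm}=0$ and therefore $-\xi(\Gamma_{\bsT_\pm},\Lambda_0)=\xi(\Lambda_0,\Gamma_{\bsT_\pm})$.

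For the middle equality the left side is already known: $\xi_{\eD_0^+}=\tfrac12(1-2\rho_+)$ by (\ref{eq: xi+}), and $\xi_{\eD_0^-}=\tfrac12(1-2\rho_-^a)+\tfrac12(1-2\rho_-^b)$ by (\ref{eq: xi-}). For the right side I would invoke Proposition \ref{prop: spec-h}, which expresses the spectrum of $D_{\Lambda_0,\Gamma_{\bsT_\pm}}$ through $\spec(\bsT_\pm^{-1}\bsT_0)$. The matrix $\bsT_\pm^{-1}\bsT_0$ is monomial, with underlying permutation a single $4$-cycle for $\bsT_+$ and a product of two transpositions for $\bsT_-$; its characteristic polynomial is therefore $\lambda^4+\det(\bsT_+^{-1}\bsT_0)$ in the first case, and a product of two quadratics $\lambda^2+\delta_i$ (with $\delta_i$ the determinant of the corresponding $2\times2$ diagonal block) in the second. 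Using $\det\bsT_0=e^{2\pi\ii\omega_0}$ (recall $\omega_0=\omega_1+\cdots+\omega_4$), $\det\bsT_+=-e^{\ii\theta_+}$, and $\omega_0^+=\omega_0-\theta_+/2\pi$ from (\ref{eq: lrmc}), one finds the eigenvalues of $\bsT_+^{-1}\bsT_0$ to be $e^{\ii\pi(\omega_0^++k)/2}$, $k=0,\dots,3$; similarly, using (\ref{eq: lim-}), the two blocks of $\bsT_-^{-1}\bsT_0$ — one built from $\omega_1,\omega_4$, the other from $\omega_2,\omega_3$ — have eigenvalues $\pm e^{\ii\pi\rho_-^a}$ and $\pm e^{\ii\pi\rho_-^b}$ respectively. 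Feeding these lists of unimodular numbers into the eta-invariant formula following Proposition \ref{prop: spec-h} — which is literally the Hurwitz-zeta computation of Section \ref{s: 1} — produces $\xi(\Lambda_0,\Gamma_{\bsT_+})=\tfrac12(1-2\rho_+)$ and $\xi(\Lambda_0,\Gamma_{\bsT_-})=\tfrac12(1-2\rho_-^a)+\tfrac12(1-2\rho_-^b)$, matching the left side. (All quantities here depend on $\omega_0^{\pm}$ only through its fractional part, which is why only the $\rho$'s survive.)

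The only genuine difficulty is bookkeeping: keeping the monomial matrices $\bsT_\pm$ compatible with $\bsT_0=\diag(e^{2\pi\ii\omega_1},\dots,e^{2\pi\ii\omega_4})$, correctly tracking the signs of the permutation determinants and the branch conventions built into the definitions of $\tau$ and of the reduced eta invariant, so that the two independent computations yield the same $\rho$'s on the nose rather than merely modulo $\bZ$ (which is all that (\ref{eq: exp-tau}) by itself would give). No analytic input beyond Proposition \ref{prop: spec-h} and the computations of Sections \ref{s: 1} and \ref{s: 3} is required.
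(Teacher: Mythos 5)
Your proof is correct and follows essentially the same route as the paper: reduce the middle equality to a spectral computation for the model operators $D_{L_0,L_1}$ of Proposition \ref{prop: spec-h}, compute the spectrum of the relevant unitary transmission product, and match the resulting Hurwitz-zeta eta values against the $\rho$'s of (\ref{eq: xi+}), (\ref{eq: xi-}).

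The differences from the paper are cosmetic rather than substantive. You read off the characteristic polynomial of the monomial matrices $\bsT_\pm^{-1}\bsT_0$ from its permutation shape (a $4$-cycle, resp.\ a product of two transpositions) and a determinant, whereas the paper writes out the products $\bsT_0^*\bsT_\pm$ entry by entry; your version is cleaner and less error-prone. You also work with $\bsT_\pm^{-1}\bsT_0$ and hence compute $\xi(\Lambda_0,\Gamma_{\bsT_\pm})$ directly, while the paper computes the spectrum of $D_{\Gamma_{\bsT_\pm},\Lambda_0}$ (so $\xi(\Gamma_{\bsT_\pm},\Lambda_0)$); these are related by the time-reversal $t\mapsto 1-t$, which you in fact invoke for the first equality. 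That first equality is where the paper is terse: it just states the string of identities and says the conclusion ``follows'' from (\ref{eq: xi+}), (\ref{eq: xi-}), (\ref{eq: herm}). Your explicit justification --- that the standing invertibility of $\eD_0^\pm$ forces $\Lambda_0\cap\Gamma_{\bsT_\pm}=0$, hence $\xi(L_0,L_1)+\xi(L_1,L_0)=\dim(L_0\cap L_1)=0$ --- is exactly what makes the sign in $-\xi(\Gamma_{\bsT_\pm},\Lambda_0)=\xi(\Lambda_0,\Gamma_{\bsT_\pm})$ legitimate, and is worth stating. The only point to keep straight, which you flag, is that the reduced eta invariant at $s=0$ is insensitive to rescaling the arithmetic progressions by a positive constant, so only the fractional parts $\rho_+$, $\rho_-^a$, $\rho_-^b$ matter and the bookkeeping with $\omega_0^\pm$, $\theta_\pm$ must be done exactly, not merely modulo $\bZ$.
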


 \begin{proof}   We need to  find the spectra of  $\bsT_0^{-1}\bsT_\pm $. We set $z_k =e^{-2\pi\ii\omega_k}$, $k=1,\dotsc,4$ and  $\rho=e^{-\ii\theta_+/2}$, so that $e^{\-\ii\theta_-/2}=-\ii \rho$. Then
 \[
 \bsT_0^*\bsT_+= \left[
  \begin{array}{cccc}
  0 & 0 & 0 &z_1\\
  z_2\rho& 0 & 0 &0\\
  0 & z_3 & 0& 0\\
  0 &0 &z_4\rho & 0
  \end{array}
  \right],\;\;\bsT_0^*\bsT_-= \left[
  \begin{array}{cccc}
  0 & 0 & 0 & z_1\\
  0& 0 & -\ii z_2\rho& 0\\
  0 & z_3 & 0& 0\\
  -\ii z_4 \rho & 0 & 0 & 0
  \end{array}
  \right].
  \]
  The eigenvalues of $\bsT_0^*\bsT_+$ are the fourth order roots of
  \[
  \zeta= \rho^2z_1\cdots z_4= e^{\ii (\theta_+ -2\pi\omega_0)}.
  \]
  Hence
  \[
  \exp^{-1} \bigl(\,\spec(\bsT_0^*\bsT_+)\,\bigr)=  \frac{\ii (\theta_+-2\pi\omega_0)}{4}+\frac{\pi\ii}{2}\bZ
  \]
  and  using (\ref{eq: spec1}) we deduce
  \[
  \spec \bigl(\, D_{\Gamma_{\bsT_+},\Lambda_0}\,\bigr)= \frac{\pi}{2}\Bigl\{\,\Bigl( \frac{\theta_+}{2\pi}-\omega_0\,\Bigr)+\bZ\,\Bigr\}.
  \]
 The eigenvalues of $\bsT_0^*\bsT_-$ are the square roots of
 \[
 z_1z_4e^{\ii \theta_-/2}=e^{-\ii (\theta_-/2+2\pi\omega_0^a)}\;\; \mbox{and}\;\; z_1z_4e^{\ii \theta_-/2}-e^{-\ii (\theta_-/2+2\pi\omega_0^b)}.
 \]
 Hence
  \[
\spec \bigl(\, D_{\Gamma_{\bsT_+},\Lambda_0}\,\bigr)=\left\{ -\pi\left(\frac{\theta_-}{4\pi}+\omega_0^a\right)+\pi\bZ\,\right\}\cup\, \left\{ -\pi\left(\,\frac{\theta_-}{4\pi}+\omega_0^b\,\right)+\pi\bZ\,\right\}.
  \]
  The desired conclusion follows using  (\ref{eq: xi+}), (\ref{eq: xi-})  and (\ref{eq: herm}). \end{proof}

   \begin{theorem}  Under the same assumptions and notations as in Theorem \ref{th: main} we have
   \[
   i_{APS}(\bpar)+ \lim_{\ve\ra 0^+}SF\bigl(\eD_t; \ve<t \leq 1\bigr)+\lim_{\ve \ra 0^+}SF\bigl(\eD_t,\; -1\leq t < -\ve\bigr)=-\omega( J\Lambda_0, \Gamma_{\bsT_+},\Gamma_{\bsT_-}) .
   \]
   \label{th: kashi}
   \end{theorem}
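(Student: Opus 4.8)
The plan is to run Theorem~\ref{th: main} and Proposition~\ref{prop: eta-kashi} into the definition of the Kashiwara--Wall index, and then check that what is left over is a single intersection dimension and a single matrix trace, both of which vanish. By Theorem~\ref{th: main} the left-hand side equals $-(\xi_+-\xi_-)$, and by Proposition~\ref{prop: eta-kashi} we have $\xi_\pm=\xi_{\eD_0^\pm}=\tau(\Gamma_{\bsT_\pm},J\Lambda_0)$, so the left-hand side is $\tau(\Gamma_{\bsT_-},J\Lambda_0)-\tau(\Gamma_{\bsT_+},J\Lambda_0)$. On the other hand, unwinding the definition $\omega(L_0,L_1,L_2)=\tau(L_1,L_0)+\tau(L_2,L_1)+\tau(L_0,L_2)$ at $(L_0,L_1,L_2)=(J\Lambda_0,\Gamma_{\bsT_+},\Gamma_{\bsT_-})$ gives
\[
\omega(J\Lambda_0,\Gamma_{\bsT_+},\Gamma_{\bsT_-})=\tau(\Gamma_{\bsT_+},J\Lambda_0)+\tau(\Gamma_{\bsT_-},\Gamma_{\bsT_+})+\tau(J\Lambda_0,\Gamma_{\bsT_-}).
\]
Subtracting, the desired equality $-(\xi_+-\xi_-)=-\omega(J\Lambda_0,\Gamma_{\bsT_+},\Gamma_{\bsT_-})$ becomes equivalent to
\[
\tau(\Gamma_{\bsT_-},J\Lambda_0)+\tau(J\Lambda_0,\Gamma_{\bsT_-})+\tau(\Gamma_{\bsT_-},\Gamma_{\bsT_+})=0.
\]

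For the first two terms I would invoke the flip relation (\ref{eq: flip1}), which collapses $\tau(\Gamma_{\bsT_-},J\Lambda_0)+\tau(J\Lambda_0,\Gamma_{\bsT_-})$ to $\dim(J\Lambda_0\cap J\Gamma_{\bsT_-})=\dim(\Lambda_0\cap\Gamma_{\bsT_-})$. Since any element of $\ker D_0$ on $\bsI$ is a solution of an ODE, hence determined by its restriction to $\pa\bsI$, the restriction map $r$ identifies $\ker\eD_0^-$ with $\Lambda_0\cap\Gamma_{\bsT_-}$ (the domain constraint $\pa_-\bu=\bsT_-\pa_+\bu$ is exactly $r(u)\in\Gamma_{\bsT_-}$); under the standing hypothesis that the $\eD_0^\pm$ are invertible this intersection is trivial. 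Thus everything reduces to showing $\tau(\Gamma_{\bsT_-},\Gamma_{\bsT_+})=\tau(\bsT_-,\bsT_+)=\tfrac{1}{2\pi\ii}\tr\log(\bsT_+^{-1}\bsT_-)=0$.

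This last identity is the only genuinely new computation, and it is the place where the node geometry enters through the relation $\theta_++\theta_-=\pi$. Using the explicit unitary $4\times4$ matrices $\bsT_+$ and $\bsT_-$ written down in Section~\ref{s: 3}, I would compute $\bsT_+^{-1}\bsT_-$ directly: after separating the basis vectors indexed by $\{1,3\}$ from those indexed by $\{2,4\}$ it becomes block diagonal, the identity on the $\{2,4\}$-block and, on the $\{1,3\}$-block, the matrix with diagonal entries $0$ and off-diagonal entries $e^{-\ii(\theta_++\theta_-)/2}=-\ii$. Hence $\spec(\bsT_+^{-1}\bsT_-)=\{1,1,\ii,-\ii\}$, and with the branch $\im\log\in[-\pi,\pi)$ one gets $\tr\log(\bsT_+^{-1}\bsT_-)=\log\ii+\log(-\ii)=\tfrac{\ii\pi}{2}-\tfrac{\ii\pi}{2}=0$, which finishes the proof.

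Since the analytic substance is already carried by Theorem~\ref{th: main} and Proposition~\ref{prop: eta-kashi}, the main obstacle here is not conceptual but one of consistency: I have to make sure the sign and orientation conventions of those two results, the $U(\bsE)\leftrightarrow\Lag$ dictionary of \cite{Cib}, the definition of $\tau$, and the permutation rule (\ref{eq: skew1}) all line up, since a single stray sign would turn the final trace identity false. Once they are aligned, the proof is precisely the three-line reduction above plus the short spectral computation of $\bsT_+^{-1}\bsT_-$; I would also note in passing that retaining the term $\dim(\Lambda_0\cap\Gamma_{\bsT_-})$ instead of discarding it yields the variant of the formula valid when the operators $\eD_0^\pm$ fail to be invertible.
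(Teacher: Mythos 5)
Your proposal is correct and follows essentially the same route as the paper: reduce to $-(\xi_+-\xi_-)$ by Theorem~\ref{th: main}, translate $\xi_\pm$ into $\tau$-terms via Proposition~\ref{prop: eta-kashi}, expand the Kashiwara--Wall index, discharge the residual intersection via the invertibility hypothesis on $\eD_0^\pm$, and finish by computing the spectrum of $\bsT_\mp^*\bsT_\pm$ (which the paper also finds to be $\{1,1,\ii,-\ii\}$, giving $\tau(\bsT_\pm,\bsT_\mp)=0$). The only cosmetic difference is that the paper groups the $\tau$-terms slightly differently before isolating the surplus $\tau(\Gamma_{\bsT_+},\Gamma_{\bsT_-})$; your bookkeeping via (\ref{eq: flip1}) and $\ker\eD_0^-\cong\Lambda_0\cap\Gamma_{\bsT_-}$ is, if anything, a bit more explicit about where the invertibility hypothesis enters.
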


   \begin{proof}     We have
   \[
    i_{APS}(\bpar)+ \lim_{\ve\ra 0^+}SF\bigl(\eD_t; \ve<t \leq 1\bigr)+\lim_{\ve \ra 0^+}SF\bigl(\eD_t,\; -1\leq t < -\ve\bigr)\stackrel{(\ref{eq: main})}{=}-(\xi_+-\xi_-)
\]
\[
     \stackrel{(\ref{eq: eta-kashi})}{=}-\tau(\Gamma_{\bsT_+},\Lambda_0)-\tau(J\Lambda_0,\Gamma_{\bsT_-})=-\omega(J\Lambda_0, \Gamma_{\bsT_+},\Gamma_{\bsT_-})+\tau(\Gamma_{\bsT_+},\Gamma_{\bsT_-}).
\]
 To compute  $\tau(\Gamma_{\bsT_+},\Gamma_{\bsT_-})=\tau(\bsT_+,\bsT_-)$ we  need to compute the spectrum of    $\bsT_-^*\bsT_+$. We set $\rho=e^{\ii\theta_+/2}$ so that $e^{-\ii\theta_-/2}=-\ii \rho$. We have
\[
\bsT_-^*\bsT_+= \left[
  \begin{array}{cccc}
  0 & 0 & 0 & \ii\bar{\rho}\\
  0& 0 & 1& 0\\
  0 & \ii\bar{\rho} & 0& 0\\
  1 & 0 & 0 & 0
  \end{array}
  \right]\,\cdot \,  \left[
  \begin{array}{cccc}
  0 & 0 & 0 &1\\
  \rho& 0 & 0 &0\\
  0 & 1 & 0& 0\\
  0 &0 &\rho & 0
  \end{array}
  \right]=\left[
  \begin{array}{cccc}
  0 & 0 & \ii & 0\\
  0 & 1 &  0 & 0\\
  \ii & 0 & 0 & 0\\
  0 & 0 & 0 & 1
  \end{array}
  \right]
  \]
  From the second and forth column we see that $1$ is an eigenvalue of $\bsT_-^*\bsT_+$ with multiplicity $2$.  The other two eigenvalues are  $\pm \ii$, namely  the eigenvalues of the $2\times 2$ minor
  \[
  \left[
  \begin{array}{cc}
  0 &\ii\\
  \ii & 0
  \end{array}
  \right].
  \]
 This shows that   $\tau(\bsT_+,\bsT_-)=0$.  \end{proof}

Let $I$ denote the  unit interval $[0,1]$. We set $\pa_+ I=\{1\}$, $\pa_- I=\{0\}$.   We identify $\widehat{\bsE}$ with the  finite dimensional  Hilbert space $L^2(\pa I, \bsE)$ and the Hilbert spaces $\bsE_\pm$  with


\end{document}